\documentclass{amsart}
\usepackage{amsmath, amssymb, amsthm, amscd, amsfonts, eucal, hyperref}
\usepackage{enumerate}
\usepackage[T5, T1]{fontenc}
\usepackage[all]{xy}

\newtheorem{theorem}{Theorem}[section]
\newtheorem{proposition}[theorem]{Proposition}
\newtheorem{lemma}[theorem]{Lemma}
\newtheorem {corollary}[theorem]{Corollary}
\theoremstyle {definition}
\newtheorem {definition}[theorem]{Definition}
\newtheorem {example}[theorem]{Example}

\newtheorem {notation}[theorem]{Notation}
\theoremstyle {remark}
\newtheorem{remark}[theorem]{Remark}

\def\Spec{\operatorname{Spec}}

\def\Ker{\operatorname{Ker}}

\def\codim{\operatorname{codim}}

\newcommand{\fm}{\ensuremath{\mathfrak m}}
\newcommand{\fn}{\ensuremath{\mathfrak n}}

\newcommand{\fp}{\ensuremath{\mathfrak p}}
\newcommand{\fq}{\ensuremath{\mathfrak q}}

\newcommand{\cO}{\ensuremath{\mathcal O}}

\newcommand{\cX}{\ensuremath{\mathcal X}}

\newcommand{\bC}{\ensuremath{\mathbb C}}
\newcommand{\bR}{\ensuremath{\mathbb R}}


\begin{document}

\title[Fibers of flat morphisms and Weierstrass preparation theorem]{Fibers of flat morphisms and Weierstrass preparation theorem}

\author{\fontencoding{T5}\selectfont \DJ o\`an Trung C\uhorn{}\`\ohorn ng}
\address{\fontencoding{T5}\selectfont \DJ o\`an Trung C\uhorn{}\`\ohorn ng. {\it Current address:} Vietnam Institute for Advanced Study in Mathematics, Ta Quang Buu Building, 01 Dai Co Viet, Hai Ba Trung, Hanoi, Vietnam. {\it Permanent address:} Institute of Mathematics, 18 Hoang Quoc Viet, 10307 Hanoi, Vietnam.}
\email{doantc@gmail.com}

\dedicatory{\fontencoding{T5}\selectfont Dedicated to Professor Nguy\~\ecircumflex
n T\d \uhorn\
 C\uhorn{}\`\ohorn ng}

\thanks{This research is funded by Vietnam National Foundation for Science and Technology Development (NAFOSTED) under grant number 101.01-2012.05.}

\subjclass[2010]{11E08, 11E12, 13B35, 14H05}
\keywords{fiber of flat morphism, Weierstrass preparation theorem, Weierstrass extension, u-invariant}

\begin{abstract} 
We characterize flat extensions of commutative rings satisfying the Weierstrass preparation theorem. Using this characterization we prove a variant of the Weierstrass preparation theorem for rings of functions on a normal curve over a complete local domain of dimension one. This generalizes recent works of Harbater, Hartmann and Krashen with a different method of proof.
\end{abstract}

\maketitle

\setcounter{section}{-1}
\section{Introduction}
\label{sect0}

The Weierstrass preparation theorem is an important result in the theory of several complex variables. Its main idea is that the local  behavior of a holomorphic function is similar to the behavior of a polynomial \cite{gh}. There are several ways to generalize this theorem for different purposes in other contexts. The algebraic form of the Weierstrass preparation theorem has important applications in algebra and algebraic number theory. For example, the usage of the algebraic form in the study of quadratic forms is well-known (cf. \cite{cdlr}).

Recently Harbater-Hartmann-Krashen have generalized the algebraic form of the Weierstrass preparation theorem to functions on curves over a complete discrete valuation ring. Using essentially techniques of patching, they showed that a fraction of elements in certain completions of the ring of regular functions on the curve factors as product of a rational function and an invertible power series \cite{hh, hhk1, hhk2}. This result was then applied to study quadratic forms and central simple algebras over the related fields of functions. A significant application is to show that a field extension of transcendence degree one over the $p$-adic numbers has u-invariant $8$. This is one of three (recent) proofs for this long-standing problem (see \cite{cop}, \cite{hhk1}, \cite{dl}, \cite{ps}).

The aim of the current paper is to generalize further the theorem of Harbater-Hartmann-Krashen on the Weierstrass preparation theorem with a different approach through ring theory and algebraic geometry. Weierstrass preparation is a kind of factorization assertion for elements of a ring extension. If the extension under consideration satisfies the Weierstrass preparation theorem (in the sense of Definition \ref{11} in Section \ref{sect1}) then all the fibers have dimension zero. This observation is our starting point. In the works of Harbater, Hartmann and Krashen, the authors consider a normal curve over a complete discrete valuation ring and study the functions that are rational on a subset of an irreducible component of the closed fiber. In our approach, we consider functions that are regular on an arbitrary subset of the closed fiber without restriction to a component. A careful analysis on the fibers of flat extensions allows us to treat the branching phenomenon when passing from functions on a component to the general case. It also gives another proof for the generalized Weierstrass preparation theorem of Harbater-Hartmann-Krashen. It is remarkable that we mainly work on the level of rings rather than on the fields.

About the structure of the paper, we introduce in Section \ref{sect1} a notion of Weierstrass homomorphism and recall an application of the Weierstrass preparation theorem in the study of the u-invariant of a field. In Section \ref{sect2} we consider flat homomorphisms which are Weierstrass. The main result of the section is Theorem \ref{23} presenting a characterization of such a Weierstrass flat homomorphism via properties of the fibers and the residue fields. This characterization is used in Section \ref{sect3} to generalize the Weierstrass preparation theorem for functions on curves over a complete local domain of dimension one (see Theorems \ref{33}, \ref{35}, \ref{36}). A consequence of the characterization theorem in Section \ref{sect2} is that over a local ring, a henselian Weierstrass extension is (up to isomorphism) intermediate between the Henselization and the completion of the ring. So the Weierstrass property of the three extensions should relate to each other. This is discussed in Section \ref{sect4}.

Throughout this paper all rings are commutative with a unit.


\section{Weierstrass preparation theorem and u-invariant}
\label{sect1}

The Weierstrass preparation theorem has important applications in the algebraic theory of quadratic forms. To motivate the works in this paper, we recall here a well-known application of the theorem in the study of the u-invariant of a field. For recent works in this direction and other applications of the Weierstrass preparation theorem, we mention Harbater-Hartmann-Krashen \cite{hhk1, hhk2}.

\begin{definition}\label{11}
Let $A, B$ be commutative rings and $\varphi: A\rightarrow B$ be a ring homomorphism. We say that the homomorphism $\varphi$ is Weierstrass if for any $b\in B$, there is an element $a\in A$ such that $b=\varphi(a)b^\prime$ for some unit $b^\prime\in B^\times$. If in addition $A\subseteq B$ is a subring, we say that $B$ is a Weierstrass extension of $A$.
\end{definition}

\begin{example}\label{12} 
We have some examples of Weierstrass homomorphisms.
\item(a) Any surjective ring homomorphism is Weierstrass.
\item(b) Let $S$ be a multiplicatively closed subset of a commutative ring $A$, then the localization map $A\rightarrow S^{-1}A$ is Weierstrass.
\item(c) Let $X$ be a normal curve over a field and $x\in X$ be a point. Let $A=\cO_{X, x}$ and $\hat A$ be the completion of $A$ with respect to the maximal ideal. As $A$ and $\hat A$ are discrete valuation rings, the completion map $A\rightarrow \hat A$ is Weierstrass. 
\item(d) Let $\alpha: A\rightarrow B$, $\beta: B\rightarrow C$ be homomorphisms of commutative rings. If both $\alpha$ and $\beta$ are Weierstrass then the composition $\beta\circ\alpha$ is Weierstrass. Conversely, if the composition $\beta\circ\alpha$ is Weierstrass then so is $\beta$. If in addition, $\beta$ is injective and $\beta(B^\times)=C^\times$, then $\alpha$ is Weierstrass too.
\end{example}

\begin{remark}\label{13}
The Weierstrass property is not a local property. If $f: A\rightarrow B$ is Weierstrass then so is the induced homomorphism $\varphi_P: A\rightarrow B_P$ for any prime ideal $P$ of the ring $B$. However, the converse is not true. For example, consider the canonical map $\varphi: \bR[X]\rightarrow \bC[X]$. Clearly $\varphi$ is not Weierstrass since there is no expression such as $X+i=a(X)b(X)$, for some $a(X)\in \bR[X]$ and invertible $b(X)\in \bC[X]$. Let $P=(X-c)$ be a maximal ideal of $\bC[X]$, $c\in \bC$. Let $\varphi_P: \bR[X]\rightarrow \bC[X]_{(X-c)}$ be the induced homomorphism. An element of $\bC[X]_{(X-c)}$ is of the form $(X-c)^nf(X)$ for some $n\geq 0$ and $f(X)\in \bC[X]_{(X-c)}$ invertible. If $c\in \bR$ then obviously $\varphi_P$ is Weierstrass. If $c\not\in \bR$ then $(X-c)(X-\bar c)$ is a real polynomial and $X-\bar c$ is invertible in $\bC[X]_{(X-c)}$, where $\bar c$ is the complex conjugate of $c$. So $(X-c)^nf(X)=(X-c)^n(X-\bar c)^n\frac{f(X)}{(X-\bar c)^n}$ and $\varphi_P$ is Weierstrass in this case.
\end{remark}

Let $k$ be a field. The u-invariant of $k$, denoted by $u(k)$, is the maximal dimension of all anisotropic quadratic forms over $k$. If the maximum does not exist, we set $u(k)=\infty$. For example, $u(\bR)=\infty$ and $u(k)=0$ if $k$ is algebraically closed. The u-invariant $u(k)$ is a very important numerical invariant of $k$ but it is hard to track information about this invariant (see \cite{lty, ps}). For instance, the question on determining all possible values of this invariant is one of the major open problems in the theory of quadratic forms. The idea of using the Weierstrass preparation theorem to study the u-invariants is well-known and has been worked out by several authors (see, for example, \cite{cdlr}). Before going to this, we need first the following consequence of Hensel's Lemma.

\begin{lemma}\label{14}
Let $A\subseteq B$ be an extension of commutative rings. Let $I$ be an ideal of $A$. Suppose $B$ is henselian relative to $IB$ and $A/I\simeq B/IB$. Let $n>0$ be a positive integer which is invertible in $B/IB$. For a unit $b\in B^\times$, there are $a\in A^\times$ and $\alpha\in B$ such that $b=a\alpha^n$.
\end{lemma}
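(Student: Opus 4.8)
The plan is to reduce everything to a single application of the henselian-pair property of $(B, IB)$ to the polynomial $T^n - ba^{-1}$, once the unit $a\in A^\times$ has been chosen correctly. Write $\bar{(\,\cdot\,)}$ for reduction modulo $IB$ in $B$ and modulo $I$ in $A$, compatibly with the isomorphism $A/I\xrightarrow{\sim}B/IB$ induced by the inclusion. Since $b\in B^\times$, its image $\bar b$ is a unit of $B/IB$; transporting it through the isomorphism yields a unit $\bar a_0\in (A/I)^\times$ whose image in $B/IB$ is again $\bar b$.

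First I would lift $\bar a_0$ to an element $a\in A$. The one point that requires genuine care --- and the step I expect to be the main obstacle --- is to guarantee that this lift can be taken to be a \emph{unit} of $A$, not merely a unit of $A/I$. This is exactly where the hypotheses must be brought to bear: because $(B,IB)$ is a henselian pair, $IB$ lies in the Jacobson radical of $B$, and in the situation of interest $I$ lies in the Jacobson radical of $A$ (for instance when $A$ is local with $I$ contained in its maximal ideal). Under that condition an element of $A$ is a unit precisely when its reduction modulo $I$ is, so any lift $a$ of the unit $\bar a_0$ automatically lies in $A^\times$. With such an $a$ fixed, its image in $B/IB$ equals $\bar b$, so $u:=ba^{-1}\in B^\times$ satisfies $\bar u=1$, i.e. $u\equiv 1\pmod{IB}$.

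Finally I would extract the $n$-th root by Hensel's lemma. Consider $f(T)=T^n-u\in B[T]$. Its reduction $\bar f(T)=T^n-1\in (B/IB)[T]$ has $T=1$ as a root, and this root is simple because $\bar f^\pr(1)=n$ is a unit in $B/IB$ by hypothesis. Since $(B,IB)$ is henselian, the simple root lifts to an honest root $\alpha\in B$ of $f$, so that $\alpha^n=u=ba^{-1}$, whence $b=a\alpha^n$ with $a\in A^\times$ and $\alpha\in B$, as required. The only nonformal input beyond bookkeeping is the unit-lifting discussed above; the remainder is the standard lifting of simple roots across a henselian pair, where the invertibility of $n$ is used precisely to secure simplicity.
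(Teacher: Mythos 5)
Your proof is correct and follows essentially the same route as the paper's: produce a unit $a\in A^\times$ congruent to $b$ modulo $IB$, then lift the simple root $1$ through the henselian pair $(B,IB)$ (the paper applies Hensel's lemma to $F(X)=aX^n-b$ rather than to the monic $T^n-ba^{-1}$; the difference is immaterial, and your monic normalization is if anything cleaner).

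The noteworthy point is the step you flag as the main obstacle: you are right that it is one, and the paper does not address it. The paper's proof simply asserts ``there is a unit $a\in A^\times$ such that $a\equiv b\pmod{IB}$'' with no justification, and this does not follow from the stated hypotheses --- in fact the lemma as literally stated is false. Take $A=\bZ$, $I=5\bZ$, $B=\bZ_5$ the ring of $5$-adic integers, $b=2$ and $n=2$: then $B$ is henselian relative to $IB=5\bZ_5$, the natural map $\bZ/5\bZ\to \bZ_5/5\bZ_5$ is an isomorphism, $2\in B^\times$, and $2$ is invertible in $B/IB$, yet $2=\pm\alpha^2$ is impossible because neither $2$ nor $-2$ is a square in $\bZ/5\bZ$; the failure is precisely that no unit of $\bZ$ (i.e.\ $\pm1$) is congruent to $2$ modulo $5$. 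Your additional hypothesis that $I$ lie in the Jacobson radical of $A$ repairs this (any lift $a$ of a unit of $A/I$ satisfies $ac\in 1+I\subseteq A^\times$ for a suitable $c\in A$, hence $a\in A^\times$), and it is harmless for the paper's purposes: wherever Lemma \ref{14} and Theorem \ref{15} are invoked ($A$ local in Section \ref{sect4}, or $A=R_U$ with $I=\fm_RR_U$ contained in the Jacobson radical of $R_U$ in Section \ref{sect3}), that condition holds. So your write-up is the correct version of the paper's argument; it treats carefully exactly the point the paper elides.
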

\begin{proof}
Since $A/I\simeq B/IB$, there is a unit $a\in A^\times$ such that $a\equiv b \pmod{IB}$. Let $F(X)=aX^n-b\in B[X]$. Then $F^\prime(1)=na$ is invertible in $B/IB$. Moreover,
$$F(1)=a-b \equiv 0 \pmod{IB}.$$
Since $B$ is henselian relative to $IB$, there is uniquely an element $\alpha\in B$ such that $F(\alpha)= a
\alpha^n-b=0$ and $\alpha\equiv 1\pmod{IB}$. 
\end{proof}

For a domain $A$, we denote $K_A$ for the field of fractions of $A$. 

\begin{theorem}\label{15}
Let $A$ be a domain and let $A\subseteq B$ be a Weierstrass extension. Let $I$ be an ideal of $A$ such that $B$ is henselian relative to $IB$ and $A/I\simeq B/IB$. Suppose $2$ is invertible in $B/IB$. Then $B$ is a domain and 
$$u(K_B)\leq u(K_A).$$
\end{theorem}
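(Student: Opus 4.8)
The plan is to establish the two assertions in turn: first that $B$ is an integral domain, and then, using the resulting embedding $K_A \hookrightarrow K_B$, to descend an anisotropic quadratic form over $K_B$ to one of the same dimension over $K_A$. The whole inequality will rest on cleaning the coefficients of a diagonalized form, which is where the Weierstrass property and Lemma \ref{14} do the work.

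To see that $B$ is a domain, I would take nonzero $b_1, b_2 \in B$ and apply the Weierstrass property to write $b_j = a_j b_j'$ with $a_j \in A$ and $b_j' \in B^\times$. Since $b_j'$ is a unit and $b_j \neq 0$, necessarily $a_j \neq 0$; as $A$ is a domain and sits inside $B$ as a subring, $a_1 a_2$ is then nonzero in $B$, and multiplying it by the unit $b_1' b_2'$ keeps it nonzero. Hence $b_1 b_2 = (a_1 a_2)(b_1' b_2') \neq 0$, so $B$ is a domain and the inclusion $A \subseteq B$ extends to an embedding $K_A \hookrightarrow K_B$ of fraction fields. I would also note that $\cha K_B \neq 2$: if $2$ vanished in $B$ it would vanish in $B/IB$ and hence fail to be invertible there, contrary to hypothesis.

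For the inequality I would start from an anisotropic form $q$ over $K_B$ of dimension $n$ and diagonalize, $q \cong \langle \beta_1, \dots, \beta_n\rangle$ with $\beta_i \in K_B^\times$. After clearing denominators and multiplying by a square I may assume each $\beta_i$ lies in $B \setminus \{0\}$. The Weierstrass property gives $\beta_i = a_i b_i'$ with $0 \neq a_i \in A$ and $b_i' \in B^\times$, and Lemma \ref{14} applied with exponent $2$ (legitimate because $2$ is invertible in $B/IB$) yields $b_i' = a_i' \alpha_i^2$ with $a_i' \in A^\times$ and $\alpha_i \in B$. Therefore $\beta_i = (a_i a_i')\alpha_i^2$, so over $K_B$ one has $\langle \beta_i\rangle \cong \langle a_i a_i'\rangle$ with $a_i a_i' \in K_A^\times$. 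Consequently $q \cong (q')_{K_B}$, where $q' = \langle a_1 a_1', \dots, a_n a_n'\rangle$ is a form of dimension $n$ defined over $K_A$. Since isotropy is preserved under field extension, $q'$ must be anisotropic over $K_A$ (otherwise its extension $q$ would be isotropic), giving $u(K_A) \geq n$. Letting $n$ run over the dimensions of anisotropic forms over $K_B$ yields $u(K_B) \leq u(K_A)$.

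The step I expect to demand the most care is the coefficient reduction: I must check that the Weierstrass factorization and the square extraction of Lemma \ref{14} can be combined so that each diagonal coefficient becomes an element of $A$ modulo a square of $K_B$, and that the factor coming from $A$ is genuinely nonzero, so that it defines an element of $K_A^\times$ and not merely of $K_A$. This is precisely where both hypotheses enter — the ``element of $A$ times a unit'' shape of the Weierstrass factorization, and the invertibility of $2$ in $B/IB$ needed to invoke Lemma \ref{14}. The remaining ingredients, namely diagonalizability in characteristic not $2$ and the preservation of isotropy under scalar extension, are standard.
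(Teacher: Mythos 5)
Your proposal is correct and follows essentially the same route as the paper: both rest on the Weierstrass factorization $\beta_i = a_i b_i'$ together with Lemma \ref{14} to extract a square from the unit, and then descend anisotropy along $K_A \hookrightarrow K_B$. The only divergence is that the paper first proves the compositum identity $K_A \cdot B = K_B$ to put each coefficient in the form $\frac{a_i}{s_i} b_i$ with $b_i \in B^\times$, whereas you bypass this by the standard trick of scaling each diagonal entry by the square of a denominator so that it lies in $B \setminus \{0\}$ --- a harmless and slightly more direct variant.
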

\begin{proof} Obviously $B$ is a domain as the extension $A\subseteq B$ is Weierstrass. We first show that $K_B$ is a compositum of $K_A$ and $B$, that is, $K_A.B=K_B$. It suffices to show that $K_A.B$ is a field since $K_A.B\subseteq K_B$. Let $\frac{a_1}{a_2}b$ be an element of $K_A.B$, where $a_1, a_2\in A$ and $b\in B$, $a_1, a_2, b\not=0$. By the Weierstrass property, $b=ab^\prime$ for some $a\in A$ and some unit $b^\prime\in B^\times$. Thus $\frac{a_1}{a_2}b=\frac{aa_1}{a_2}b^\prime$ which is invertible in $K_A.B$. This shows that $K_A.B$ is a field and $K_A.B=K_B$.

Set $n=u(K_B)$. Let $$\varphi(x)=\beta_1x_1^2+\ldots + \beta_nx_n^2,$$
be an anisotropic quadratic form with $\beta_1, \ldots, \beta_n\in K_B$. Since $K_A.B=K_B$, as discussed above we have
$\beta_i=\frac{a_i}{s_i}b_i,$ for $a_i, s_i\in A$, $s_i\not=0$ and $b_i\in B^\times$ a unit, $i=1, \ldots, n$. Note that $A/I\simeq B/IB$ and $B$ is henselian relative to $IB$. Using Lemma \ref{14}, we obtain an expression $b_i=c_i\alpha_i^2$ for some $c_i\in A^\times, \alpha_i\in B^\times$, $i=1, \ldots n$. So
$$\varphi(x)=\frac{a_1c_1}{s_1}(\alpha_1 x_1)^2+\ldots+ \frac{a_nc_n}{s_n}(\alpha_n x_n)^2,$$
which defines an anisotropic quadratic form over $K_A$. Therefore $u(K_A)\geq u(K_B)$.
\end{proof}

\begin{remark}\label{16} (i) In the application of Theorem \ref{15}, usually the field $K_B$ is henselian or even complete. There are many cases the u-invariant of $K_B$ is computable. The well-known theorem of Springer is an example. Theorem \ref{15} thus gives a bound for the u-invariant of $K_A$.\smallskip

\noindent (ii) For any $n$ invertible in $A/I$, the proof of Theorem \ref{15} works for any homogeneous form of degree $n$ instead of just quadratic forms and over rings rather than fields. In particular, we get a result similar to Theorem \ref{15} for higher degree homogeneous forms.

\end{remark}


\section{Weierstrass flat homomorphisms}
\label{sect2}

This section is devoted to a careful study of Weierstrass flat homomorphisms. The main result is Theorem \ref{23} in which a characterization of Weierstrass flat homomorphisms in the local case is given. We begin with a couple of lemmas.

\begin{lemma}\label{21}
Let $\varphi: A\rightarrow B$ be a homomorphism of commutative rings. Assume $B$ has a unique maximal ideal $\fm_B$. The following statements are equivalent:

(a) The homomorphism $\varphi$ is Weierstrass.

(b) Any ideal $I$ of $B$ has a set of generators in the image of $\varphi$, that means, $I=\varphi(\varphi^{-1}(I))B$.

(c) Any principal ideal of $B$ has a set of generators in the image of $\varphi$.
\end{lemma}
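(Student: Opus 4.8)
The plan is to establish the cycle of implications $(a)\Rightarrow(b)\Rightarrow(c)\Rightarrow(a)$. The first two implications should be purely formal and require no hypothesis on $B$; the locality of $B$ enters only in the last implication, which I expect to be the crux.

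For $(a)\Rightarrow(b)$, fix an ideal $I$ of $B$. The inclusion $\varphi(\varphi^{-1}(I))B\subseteq I$ is automatic. For the reverse, take any $b\in I$ and use the Weierstrass property to write $b=\varphi(a)b'$ with $b'\in B^\times$; then $\varphi(a)=b(b')^{-1}\in I$, so $a\in\varphi^{-1}(I)$ and hence $b=\varphi(a)b'\in\varphi(\varphi^{-1}(I))B$. This gives $I=\varphi(\varphi^{-1}(I))B$. The implication $(b)\Rightarrow(c)$ is immediate, since a principal ideal is in particular an ideal.

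The substance is $(c)\Rightarrow(a)$. Given $b\in B$, I would apply $(c)$ to the principal ideal $bB$, obtaining $bB=\varphi(\varphi^{-1}(bB))B$, so that $b$ lies in the ideal generated by the images of $\varphi^{-1}(bB)$. Since membership in an ideal is witnessed by a finite sum, I can write $b=\sum_{i=1}^n\varphi(a_i)c_i$ with $a_i\in\varphi^{-1}(bB)$ and $c_i\in B$. By definition each $\varphi(a_i)\in bB$, say $\varphi(a_i)=bd_i$. Substituting gives $b=b\sum_{i=1}^n d_ic_i$, that is, $b\bigl(1-\sum_i d_ic_i\bigr)=0$. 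If $b=0$ the assertion is trivial (take $a=0$); otherwise $1-\sum_i d_ic_i$ cannot be a unit, so it lies in $\fm_B$, forcing $\sum_i d_ic_i$ to be a unit of the local ring $B$.

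The final step, where locality is indispensable, is to extract a single generator. Since $\sum_i d_ic_i$ is a unit it does not lie in $\fm_B$, and because $\fm_B$ is an ideal some summand must satisfy $d_jc_j\notin\fm_B$; in particular $d_j\notin\fm_B$, so $d_j\in B^\times$. Then $\varphi(a_j)=bd_j$ yields $b=\varphi(a_j)d_j^{-1}$ with $d_j^{-1}\in B^\times$, which is exactly a Weierstrass factorization of $b$. I expect the main obstacle to be precisely this passage from the multi-term relation furnished by $(c)$ to a single associate of $b$ in the image of $\varphi$: over an arbitrary ring $(c)$ only tells us that $bB$ is generated by finitely many images, and it is the fact that the non-units of $B$ form the ideal $\fm_B$, so that a unit cannot be a sum of non-units, that lets us collapse this to one generator.
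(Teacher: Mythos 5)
Your proposal is correct and follows essentially the same route as the paper: both reduce to the principal ideal $bB$, write $b$ as a finite combination of elements of $\varphi^{-1}(bB)$, substitute $\varphi(a_i)=bd_i$ to obtain $b\bigl(1-\sum_i d_ic_i\bigr)=0$, and then use the unit/non-unit dichotomy in the ring with unique maximal ideal to either extract a single invertible $d_j$ (giving the Weierstrass factorization) or conclude $b=0$. The paper cases on whether some $d_i$ is a unit while you case on whether $b=0$, but these are contrapositive reorganizations of the identical argument.
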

\begin{proof}
Replace $A$ by $\varphi(A)$ we can assume $A\subseteq B$. The implication $(a)\Rightarrow (b)$ and the equivalence $(b)\Leftrightarrow(c)$ are obvious. We will prove $(b)\Rightarrow (a)$.

Take an element $b\in B$ and set $J=(bB)\cap A$. Since $bB=JB$, $b=\sum_{i=1}^n a_ib_i$ for some $a_1, \ldots, a_n\in J$, $b_1, \ldots, b_n\in B$. On the other hand, as elements of $J\subseteq bB$, $a_i=c_ib$ for some $c_i\in B$, $i=1, \ldots, n$. So $b(1-\sum_{i=1}^nb_ic_i)=0$. If there is an element $c_i$ invertible, $b=c_i^{-1}a_i$ is the required expression. Otherwise, $c_1, \ldots, c_n$ are all in $\fm_B$ and hence $1-\sum_{i=1}^nb_ic_i$ is invertible. This implies that $b=0$.
\end{proof}

The second lemma should be known by experts. We include it with proof in this section for the sake of completeness.

\begin{lemma}\label{22}
Let $A, B$ be Noetherian local rings  with maximal ideals $\fm_A, \fm_B$ respectively. Assume there is a faithfully flat homomorphism $\varphi: A \rightarrow B$ such that $\sqrt{\fm_AB}=\fm_B$. Let $M$ be an $A$-module of finite length. Then $M\otimes_AB$ is a $B$-module of finite length and $\ell_B(M\otimes_AB)=\ell_A(M)\ell_B(B/\fm_AB)$.
\end{lemma}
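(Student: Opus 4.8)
The plan is to reduce everything to the single simple $A$-module and then exploit flatness to transport a composition series across $\varphi$. The crucial preliminary step is to observe that the hypothesis $\sqrt{\fm_AB}=\fm_B$ forces $B/\fm_AB$ to be an Artinian local ring, so that $\ell_B(B/\fm_AB)<\infty$; this finite number is exactly the multiplier in the claimed formula. To see this I would argue that $B/\fm_AB$ is Noetherian, being a quotient of $B$, and that its prime ideals correspond to primes of $B$ containing $\fm_AB$. By the radical hypothesis the only such prime is $\fm_B$, so $B/\fm_AB$ has Krull dimension zero and is therefore Artinian, hence of finite length over itself and equivalently over $B$.

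Next I would use that $A$ is local, so its unique simple module is the residue field $A/\fm_A$. A module $M$ with $n=\ell_A(M)<\infty$ thus admits a composition series
$$0=M_0\subset M_1\subset\cdots\subset M_n=M,\qquad M_i/M_{i-1}\cong A/\fm_A.$$
Tensoring this filtration with $B$ and using that $\varphi$ is flat — so that $-\otimes_AB$ is exact and in particular sends the injections $M_{i-1}\hookrightarrow M_i$ to injections — produces a filtration of $B$-modules
$$0=M_0\otimes_AB\subseteq M_1\otimes_AB\subseteq\cdots\subseteq M_n\otimes_AB=M\otimes_AB$$
whose successive quotients are
$$(M_i\otimes_AB)/(M_{i-1}\otimes_AB)\cong(M_i/M_{i-1})\otimes_AB\cong(A/\fm_A)\otimes_AB\cong B/\fm_AB.$$

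Finally, additivity of length along this filtration yields
$$\ell_B(M\otimes_AB)=\sum_{i=1}^{n}\ell_B\bigl((M_i\otimes_AB)/(M_{i-1}\otimes_AB)\bigr)=n\,\ell_B(B/\fm_AB)=\ell_A(M)\,\ell_B(B/\fm_AB),$$
which in particular shows the left-hand side is finite. I expect the main obstacle to be the first step, namely establishing $\ell_B(B/\fm_AB)<\infty$, since this is precisely where the radical hypothesis $\sqrt{\fm_AB}=\fm_B$ is indispensable; everything after choosing a composition series is routine, and I note that plain flatness already suffices for the identity, with faithful flatness only guaranteeing that $M\otimes_AB$ is nonzero when $M$ is.
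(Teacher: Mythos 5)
Your proof is correct and follows essentially the same route as the paper's: tensor a composition series of $M$ with $B$, use flatness to preserve the strict inclusions, identify each successive quotient with $B/\fm_AB$, and add up lengths. Your preliminary step making explicit that $\sqrt{\fm_AB}=\fm_B$ forces $B/\fm_AB$ to be Artinian (hence of finite length) is a point the paper leaves implicit, but it does not change the substance of the argument.
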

\begin{proof}
Take a chain of submodules of $M$, say $0=M_0\subset M_1\subset \ldots \subset M_n=M$, where $M_i/M_{i-1}\simeq A/\fm_A$, $i=1, \ldots, n$, and $n=\ell_A(M)$. The flatness of the homomorphism $\varphi$ gives rise to a chain of submodules of $M\otimes_AB$, 
$$0=M_0\otimes_AB\subset M_1\otimes_AB\subset \ldots \subset M_n\otimes_AB.$$
Note that $(M_i\otimes_AB)/(M_{i-1}\otimes_AB)\simeq (M_i/M_{i-1})\otimes_AB\simeq (A/\fm_A)\otimes_AB\simeq B/\fm_AB$. So $\ell_B(M\otimes_AB)=\ell_A(M)\ell_B(B/\fm_AB)$ which is finite.
\end{proof}

It is usually hard to use directly the definition of Weierstrass homomorphism in order to show a homomorphism being Weierstrass. Lemma \ref{21} provides a criterion which is still hard to be used. In the next we present a criterion for a flat homomorphism to be Weierstrass by means of conditions on the prime ideals and associated residue fields. This criterion will be shown to be very useful in several situations in the next sections. 

For a prime ideal $\fp$ of a ring $R$, we denote by $k_R(\fp):=R_\fp/\fp R_\fp$ the residue field.

\begin{theorem}\label{23} 
Let $\varphi: A\rightarrow B$ be a flat homomorphism of Noetherian rings, where $B$ has a unique maximal ideal $\fm_B$. The homomorphism $\varphi$ is Weierstrass if and only if any prime ideal $P\in \Spec(B)$ satisfies

(a) $P=\varphi(\varphi^{-1}(P))B$, that means, $P$ has a set of generators in the image of $\varphi$;

(b) Either $B_P$ is a principal ideal ring or the canonical inclusion $k_A(\varphi^{-1}(P))\hookrightarrow k_B(P)$ is an isomorphism (in this case we will identify the two fields).
\end{theorem}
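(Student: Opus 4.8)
The plan is to first reduce to the local case, then prove the two implications separately, the converse being the substantial one.

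Before anything else I would normalize the setup. Put $\fm=\varphi^{-1}(\fm_B)$. Every $s\in A\setminus\fm$ satisfies $\varphi(s)\notin\fm_B$, hence $\varphi(s)\in B^\times$, so $\varphi$ factors as $A\xrightarrow{\ \ell\ }A_\fm\xrightarrow{\ \psi\ }B$. By Example~\ref{12}(b) the localization $\ell$ is Weierstrass, and by Example~\ref{12}(d) $\varphi$ is Weierstrass if and only if $\psi$ is; moreover $\psi$ is again flat, and since every prime of $B$ contracts into $\fm$, conditions (a) and (b) are literally unchanged. Thus I may assume $A$ is local and $\varphi$ is a faithfully flat local homomorphism; being faithfully flat it is injective, so I treat $A\subseteq B$ and write $I^{c}:=\varphi^{-1}(I)=I\cap A$. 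Throughout I use Lemma~\ref{21}: $\varphi$ is Weierstrass if and only if every ideal $I$ of $B$ is \emph{extended}, meaning $I=I^{c}B$.

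For the direction ``Weierstrass $\Rightarrow$ (a),(b)'', condition (a) is immediate from Lemma~\ref{21} applied to $I=P$. For (b) I fix $P$, set $\fp=\varphi^{-1}(P)$, and pass to $A_\fp\to B_P$, which is flat and, by Remark~\ref{13}, Weierstrass. Condition (a) gives $PB_P=\fp B_P$, so this is a faithfully flat local map whose closed fibre $B_P/\fp B_P=k_B(P)$ is a field; flatness then yields the graded isomorphism
\[
\gr_{PB_P}(B_P)\ \cong\ \gr_{\fp A_\fp}(A_\fp)\otimes_{k_A(\fp)}k_B(P).
\]
Assume $B_P$ is not a principal ideal ring; then $\fp A_\fp$ needs at least two generators, and I pick $x,y$ extending to a minimal generating system. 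Given $\lambda\in k_B(P)$, lift it to $\theta\in B_P$ and set $b=\theta\,\varphi(x)+\varphi(y)$; since $\bar x,\bar y$ are $k_B(P)$-independent in degree one, $b$ has order $1$ with leading form $\lambda\bar x+\bar y$. Writing $b=\varphi(a)u$ with $u\in B_P^\times$ forces $a$ to have order $1$, and as the degree-one leading form of $\varphi(a)$ has all coefficients in $k_A(\fp)$, comparison in the basis $\{\bar x,\bar y,\dots\}$ gives $\lambda\in k_A(\fp)$. Hence $k_A(\fp)=k_B(P)$, which is (b).

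For the converse I assume (a) and (b) and must show every ideal is extended. If not, by Noetherianity I pick $I$ maximal among non-extended ideals. The key mechanism is that flatness makes extension commute with finite intersections, $(\fa\cap\fb)B=\fa B\cap\fb B$ for ideals of $A$. Hence if $I=I_1\cap I_2$ with $I_1,I_2\supsetneq I$, then both $I_j$ are extended by maximality and $I=I_1^{c}B\cap I_2^{c}B=(I_1^{c}\cap I_2^{c})B=I^{c}B$ is extended too, a contradiction. Therefore $I$ is irreducible, hence primary; let $P=\sqrt I$. By (a) the prime $P$ is extended, so $I\subsetneq P$, and it remains to contradict non-extendedness of a $P$-primary ideal. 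I split on (b) at $P$: if $B_P$ is a principal ideal ring then $IB_P$ is a power $(PB_P)^n=(\varphi(s)^n)$ with $s\in A$ generating $\fp A_\fp$; if instead $k_A(\fp)=k_B(P)$, the graded isomorphism above degenerates to $\gr(B_P)\cong\gr(A_\fp)$, from which one reads off that $IB_P$ is already extended from $A_\fp$.

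The hard part will be the descent from these local statements at $P$ to the global assertion that $I$ itself is extended from $A$. I expect to exploit that $I$ is $P$-primary, so $I=IB_P\cap B$, together with faithful flatness and, crucially, condition (a): because (a) forces each nonempty fibre to be a single reduced point, distinct minimal primes of any ideal lie over distinct primes of $A$, so the ``branching'' upstairs is separated downstairs and the locally produced $A$-generators do not interfere. Controlling the contraction $IB_P\cap A$ finely enough to certify that its extension recovers $I$ exactly—rather than only after localizing or completing, where the transcendental ``curve-type'' counterexamples that violate (a) live—is the delicate point, and is precisely where (a) and (b) must be used in tandem rather than separately.
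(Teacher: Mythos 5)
Your reduction to the local case and your proof of the necessity direction are sound, and they essentially coincide with the paper's argument (the paper uses a length computation, Lemma \ref{22}, where you use the graded isomorphism coming from flatness, but the coefficient-comparison trick with $b=\theta\varphi(x)+\varphi(y)$ is the same as the paper's $b=x_1+fx_2$). The converse, however, is not proved: your last paragraph is an announcement of a plan, not an argument. Concretely, after reducing (correctly, via maximality among non-extended ideals and $(\fa\cap\fb)B=\fa B\cap\fb B$) to a $P$-primary ideal $I$ whose localization $IB_P$ is extended from $A_\fp$, you still owe the implication
\[
IB_P=\fq_0B_P,\quad \fq_0:=IB_P\cap A
\qquad\Longrightarrow\qquad
I=\fq_0B,
\]
and ``I expect to exploit\dots the delicate point'' is exactly where the proof stops. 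Note that $I=IB_P\cap B$ and $IB_P=\fq_0B_P$ only give $I=\fq_0B_P\cap B\supseteq \fq_0B$; equality needs the extended ideal $\fq_0B$ to be $P$-primary, and nothing in your text establishes this.

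The good news is that this gap can be closed, and less delicately than you fear. Since $B$ is flat over $A$ and $\fq_0$ is $\fp$-primary, the base-change theorem for associated primes (\cite[Theorem 23.2]{mat1}) gives
$\Ass_B(B/\fq_0B)=\bigcup_{\fp'\in\Ass_A(A/\fq_0)}\Ass_B(B/\fp'B)=\Ass_B(B/\fp B)$;
by condition (a) the ideal $\fp B=P$ is prime, so $\Ass_B(B/\fq_0B)=\{P\}$, i.e.\ $\fq_0B$ is $P$-primary. A $P$-primary ideal equals the contraction of its localization at $P$, so $\fq_0B=\fq_0B_P\cap B=IB_P\cap B=I$, contradicting non-extendedness of $I$. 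You should also record two compressed steps in your local analysis: when $B_P$ is a principal ideal ring, your own graded isomorphism shows $\fp A_\fp$ is principal (so $PB_P=\varphi(s)B_P$ with $s\in A$ as you assert); and when $k_A(\fp)=k_B(P)$, the isomorphism $\gr(A_\fp)\simeq\gr(B_P)$ must first be upgraded to $A_\fp/\fp^nA_\fp\simeq B_P/P^nB_P$ (a finite filtration argument) before one can ``read off'' that every $PB_P$-primary ideal is extended. With these insertions your route is complete and genuinely different from the paper's: the paper inducts on dimension and finishes with the isomorphism $\Gamma_{\fm_A}(A)\simeq\Gamma_{\fm_B}(B)$ obtained from flat base change of torsion, whereas you induct on ideals, reduce to irreducible (hence primary) ideals, and descend fiberwise.
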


\begin{proof}
Replace $A$ by $\varphi(A)$ we can assume $A\subseteq B$.\medskip

\noindent{\it Necessary condition}: Assume $A\subseteq B$ is a Weierstrass extension. Let $P$ be a prime ideal of $B$ and let $\fp=P\cap A$. Lemma \ref{21} gives $P=\fp B$. It remains to show that $k_A(\fp)=k_B(P)$ if $B_P$ is not a principal ideal ring.

Since $P$ is the only prime ideal in the fiber at $\fp$, we have $B\otimes_AA_\fp\simeq B_P$. So the map $A_\fp\rightarrow B_P$ is flat by base change and it is obviously Weierstrass. The proof is thus reduced to the case of local rings $(A, \fm_A), (B, \fm_B)$ such that $B$ is not a principal ideal ring, $\fm_AB=\fm_B$ and $P=\fm_B$.

We are going to show that $A/\fm_A=B/\fm_B$. Now let $\bar f\in B/\fm_B$. Let $x_1, \ldots, x_n\in \fm_A$ be a minimal set of generators of $\fm_A$. Applying Lemma \ref{22} to the $A$-module $\fm_A/\fm_A^2$ we obtain
 $$n:=\dim_{A/\fm_A} (\fm_A/\fm_A^2)=\ell_A(\fm_A/\fm_A^2)=\ell_B(\fm_B/\fm_B^2)=\dim_{B/\fm_B}(\fm_B/\fm_B^2).$$ 
Consequently, $x_1, \ldots, x_n$ is a minimal set of generators of $\fm_B$. Since $B$ is not a principal ideal ring, $\fm_B$ is not a principal ideal and particularly, $n>1$.

Set $b=x_1+fx_2\in \fm_B$. The assumption of Weierstrass extension gives rise to an expression $b=ab^\prime$ for some $a\in \fm_A$ and $b^\prime \in B^\times$. Write $a=\sum_{i=1}^na_ix_i$. In the $B/\fm_B$-vector space $\fm_B/\fm_B^2$ we have
$$\bar x_1+\bar f\bar x_2=\overline{ab^\prime}=\sum_{i=1}^n\overline{a_ib^\prime}\bar x_i,$$
where $\bar f, \bar a_1, \ldots, \bar a_n, \overline{b^\prime}\in B/\fm_B$. The choice of $x_i$'s implies that $\bar x_1, \ldots, \bar x_n$ is a basis of this vector space. Hence $\bar a_1\overline{b^\prime}=\bar 1$ and $\bar a_2\overline{b^\prime}=\bar f$. So $\bar f=\bar a_1^{-1}.\bar a_2\in A/\fm_A$ and thus $A/\fm_A=B/\fm_B$.
\medskip

\noindent{\it Sufficient condition}: Set $\fn=\fm_B\cap A$. Since $\fm_B=\fn B$, $B\otimes_AA_\fn\simeq B$. So the embedding $A\hookrightarrow B$ factors to two flat homomorphisms $A\rightarrow A_\fn \rightarrow B_\fn=B$. To show that the extension $A\subseteq B$ is Weierstrass, it reduces to showing this for the extension $A_\fn\subseteq B$. Thus without lost of generality, we can assume $A$ is local with a maximal ideal $\fm_A$ and $\fm_AB=\fm_B$. It is worth noting that under this assumption, $\dim A=\dim B$.

If $\fm_B$ is principal then the conclusion is clear. Indeed, since $\fm_AB=\fm_B$, it suffices to show that $\fm_A$ is principal. But since $B/\fm_B^2$ is flat over $A/\fm_A^2$, by Lemma \ref{22}, we have $\dim_{A/\fm_A}\fm_A/\fm_A^2=\dim_{B/\fm_B}\fm_B/\fm_B^2=1$. So $\fm_A$ is principal.

Now assume that $\fm_B$ is not principal. The conclusion is proved by induction on the dimension of the rings. We have $A/\fm_A=B/\fm_B$. So
$$B=A+\fm_B=A+\fm_AB=A+\fm_A+\fm_A^2B=\ldots =A+\fm_A^tB,$$
for all $t>0$. If $\dim A=\dim B=0$ then $\fm_A^t=0$ for some big enough $t\gg 0$. So 
\begin{equation}\label{dimnull}
A=B,
\end{equation}
and the conclusion follows.

Let $\dim A=\dim B>0$. Using Lemma \ref{21}(c), we will show that for any $b\in \fm_B\setminus \{0\}$, $bB=IB$ where $I=(bB)\cap A$. Since base change preserves flatness, the induced homomorphism $A/I\rightarrow B/IB$ is faithfully flat. In particular, it is injective. Let $P\in \Spec(B)$ be a prime ideal such that $I\subseteq P$. We have
$P/IB=\big((P/IB)\cap(A/I)\big)B/IB$, where intersection is taken in the ring $B/IB$. If the ideal $PB_P$ is principal then so is $(P/I)B_P$. If $PB_P$ is not principal then $$k_{B/IB}(P/IB)=k_B(P)=k_A(P\cap A)=k_{A/I}\big((P/IB)\cap (A/I)\big).$$
So the flat embedding $A/I\rightarrow B/IB$ satisfies the sufficient conditions in the theorem. 

There are two cases to  be considered. The first case is $\dim A/I<\dim A$. Then the extension $A/I\rightarrow B/IB$ is Weierstrass by the induction assumption. Using again Lemma \ref{21} we get that $bB/IB=0$ since $(bB/IB)\cap (A/I)=0$. In particular, $bB=IB$. 

The remaining case is $\dim A/I=\dim A$. Replace $A$ by $A/I$ and $B$ by $B/IB$ we can assume without lost of generality that $I=0$. We are going to show that $bB=0$. Let $P\not=\fm_B$ be any prime ideal of $B$ and put $\fp=P\cap A$. As the argument at the beginning of the proof for the necessary condition, $B_P=BA_\fp$ and the induced homomorphism $A_\fp\rightarrow B_P$ satisfies the sufficient condition of the theorem. So $bB_P\cap A_\fp=b(BA_\fp)\cap A_\fp=0$ and thus $bB_P=0$ by the induction assumption, because $\dim B_P<\dim B$. In particular, $bB$ is of finite length. 

We denote by $\Gamma_{\fm_B}(B):=\bigcup_{t>0}(0:_B\fm_B^t)$ the $\fm_B$-torsion ideal of $B$.  The Noetherian assumption of the rings guarantees  $\fm_A^t\Gamma_{\fm_A}(A)=0$ and $\fm_B^t\Gamma_{\fm_B}(B)=0$ for any big enough $t>0$. In particular, we can consider $\Gamma_{\fm_A}(A)$  and $\Gamma_{\fm_B}(B)$ as modules over $A/\fm_A^t$ and $B/\fm_B^t$ respectively. As it has been shown in the case $\dim A=\dim B=0$ (see (\ref{dimnull})), the injective homomorphism $A/\fm_A^t\rightarrow B/\fm_B^t$ is in fact an isomorphism. The flat base change theorem (see \cite[Theorem 4.3.2]{bs}) then gives us isomorphisms
\[\begin{aligned}
\Gamma_{\fm_B}(B)\simeq \Gamma_{\fm_A}(A)\otimes_AB
&\simeq \Gamma_{\fm_A}(A)\otimes_{A/\fm_A^t}A/\fm_A^t\otimes_AB\\
&\simeq \Gamma_{\fm_A}(A)\otimes_{A/\fm_A^t}B/\fm_B^t\\
&\simeq \Gamma_{\fm_A}(A).
\end{aligned}\]
Since the ideal $bB$ is of finite length, we have an inclusion $bB\subseteq \Gamma_{\fm_B}(B)$. The assumption $bB\cap A=0$ implies that $bB=0$ via the isomorphism $\Gamma_{\fm_A}(A)\simeq \Gamma_{\fm_B}(B)$ as above.
\end{proof}

\begin{remark}\label{24} 
Let $\varphi: A\rightarrow B$ be a flat homomorphism of Noetherian rings. For each prime ideal $\fp$ of $A$, the fiber ring of $\varphi$ at $\fp$ is defined as $B\otimes_Ak_A(\fp)$. The Krull dimension of the fiber ring is called the dimension of the fiber at $\fp$ of $\varphi$. It is exactly the dimension of the fiber at the point $\fp$ of the induced morphism $\Spec(B)\rightarrow \Spec(A)$. In condition (a) of Theorem \ref{23}, for a prime ideal $P$ of $B$, $P=\varphi(\fp)B$ for $\fp:=\varphi^{-1}(P)$. This means the fiber of $\varphi$ at $\fp$ is of dimension zero and the ideal $\varphi(\fp)B$ is a prime ideal. On the other hand, if $\fq$ is a prime ideal of $A$, then either $\fq B=B$ or there is a minimal prime ideal $Q$ containing $\fq B$ such that $\varphi^{-1}(Q)=\fq$, as $\varphi$ is flat. So condition (a) is equivalent to 

(a1) All fibers of the flat homomorphism $\varphi$ are of dimension zero.

(a2) For a prime ideal $\fp$ of $A$, either $\fp B=B$ or $\fp B$ is a prime ideal. 

\noindent If this is the case, the fiber ring $B\otimes_Ak_A(\fp)\simeq k_B(\fp B)$ is a field.

\end{remark}

The proof of Theorem \ref{23} immediately implies the following consequence.

\begin{corollary}\label{25}
Let $A\subseteq B$ be a flat extension, where $A$, $B$ are Noetherian local domains. Then the extension $A\subseteq B$ is Weierstrass if and only if for any prime ideal $P\in \Spec(B)$, $P=\fp B$ where $\fp = P\cap A$, and either $B_P$ is a field or a discrete valuation ring or $A_\fp, B_P$ are analytically isomorphic, that is, they have the same completion $\widehat{A_\fp}\simeq\widehat{B_P}$. 
\end{corollary}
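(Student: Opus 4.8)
The plan is to deduce the statement directly from Theorem \ref{23} by translating its two conditions into the language of domains. Since $A\subseteq B$ is an inclusion, the map $\varphi$ is injective and Theorem \ref{23} applies verbatim: the extension is Weierstrass if and only if every $P\in\Spec(B)$ satisfies (a) $P=\fp B$ with $\fp=P\cap A$, and (b) either $B_P$ is a principal ideal ring or $k_A(\fp)=k_B(P)$. Condition (a) is already the first assertion of the corollary, so the real work is to reformulate condition (b).

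First I would dispose of the principal-ideal-ring alternative. Because $B$ is a domain, each localization $B_P$ is a Noetherian local domain, and such a ring is a principal ideal ring precisely when its maximal ideal is principal, which in turn happens exactly when $B_P$ is a field or a discrete valuation ring. (A principal ideal domain that is local is a DVR or a field, and conversely fields and DVRs are principal ideal rings.) This matches the first two alternatives in the statement.

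Next, fix a prime $P$ satisfying (a) and set $\fp=P\cap A$. Localizing the flat inclusion gives a flat local homomorphism $A_\fp\hookrightarrow B_P$ of Noetherian local domains, and condition (a) yields $\fm_{A_\fp}B_P=PB_P=\fm_{B_P}$. The heart of the argument is to show that, for such a map, the residue field equality $k_A(\fp)=k_B(P)$ is equivalent to analytic isomorphism $\widehat{A_\fp}\simeq\widehat{B_P}$. The implication from analytic isomorphism to equal residue fields is immediate, since an isomorphism of completions induces an isomorphism of their residue fields, which is the natural inclusion $k_A(\fp)\hookrightarrow k_B(P)$. For the converse I would pass to associated graded rings. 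Using flatness together with $\fm_{A_\fp}B_P=\fm_{B_P}$ (in the spirit of Lemma \ref{22}), one obtains $\fm_{B_P}^t/\fm_{B_P}^{t+1}\simeq(\fm_{A_\fp}^t/\fm_{A_\fp}^{t+1})\otimes_{k_A(\fp)}k_B(P)$ for every $t$; when $k_A(\fp)=k_B(P)$ this says that the canonical graded map $\gr_{\fm_{A_\fp}}(A_\fp)\to\gr_{\fm_{B_P}}(B_P)$ is bijective in each degree, hence an isomorphism of graded rings. Since the completion map $\widehat{A_\fp}\to\widehat{B_P}$ respects the adic filtrations and induces this graded isomorphism between complete separated rings, it is itself an isomorphism.

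Combining these two reformulations, condition (b) for $P$ becomes exactly ``$B_P$ is a field or a DVR, or $\widehat{A_\fp}\simeq\widehat{B_P}$,'' which is the desired description. I expect the main obstacle to be the converse half of the middle step, namely upgrading the mere equality of residue fields to a genuine isomorphism of completions compatible with the extension; the care needed there is to verify that flatness forces the associated graded map to be an isomorphism and then to invoke the standard fact that a filtration-preserving map of complete separated filtered rings inducing an isomorphism on associated graded objects is an isomorphism. Everything else is a straightforward translation of Theorem \ref{23}.
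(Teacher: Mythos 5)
Your proposal is correct, and its skeleton is the same as the paper's: both deduce the corollary from Theorem \ref{23} by observing that for a Noetherian local domain ``principal ideal ring'' means exactly ``field or discrete valuation ring,'' and then handling the remaining alternative. The only genuine divergence is in the key sub-step, upgrading the residue-field equality $k_A(\fp)=k_B(P)$ to $\widehat{A_\fp}\simeq\widehat{B_P}$ for a prime $P$ satisfying condition (a). The paper does this by recycling the computation from the sufficient-condition part of the proof of Theorem \ref{23}: after replacing $A\subseteq B$ by $A_\fp\subseteq B_P$, the equalities $A/\fm_A=B/\fm_B$ and $\fm_AB=\fm_B$ give $B=A+\fm_A^tB$ for all $t$, so $A/\fm_A^t\to B/\fm_B^t$ is surjective, while faithful flatness gives injectivity; passing to the inverse limit yields the isomorphism of completions. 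You instead compute graded pieces via flatness, $\fm_B^t/\fm_B^{t+1}\simeq(\fm_A^t/\fm_A^{t+1})\otimes_{k_A(\fp)}k_B(P)$, conclude that the map $\gr_{\fm_{A_\fp}}(A_\fp)\to\gr_{\fm_{B_P}}(B_P)$ is an isomorphism, and invoke the standard fact that a filtration-compatible map of complete separated filtered rings inducing an isomorphism on associated graded rings is an isomorphism. Both arguments are valid and of comparable depth; the paper's is shorter in context because the telescoping computation is already on hand, whereas yours is self-contained and has the merit of spelling out explicitly the converse translation (an isomorphism of completions forces equal residue fields, and a field or DVR is a principal ideal ring), which the paper compresses into the single remark that ``it suffices'' to prove the forward implication.
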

\begin{proof} Using Theorem \ref{23} it suffices to show that if $A\subseteq B$ is a Weierstrass extension and $PB_P$ is not principal then $A_\fp$ and $B_P$ are analytically isomorphic. Replace the extension $A\subseteq B$ by $A_\fp\subseteq B_P$ we can assume without lost of generality that $P=\fm_B$, $\fp=\fm_A$ are maximal ideals of $B$ and $A$ respectively. Since $A\rightarrow B$ is faithfully flat with $\fm_B=\fm_AB$, the same argument as in the proof of Theorem \ref{23} (the sufficient condition part) shows that $B/\fm_B^t\simeq A/\fm_A^t$ for any $t>0$. This proves the corollary.
\end{proof}




\begin{corollary}\label{26}
Let $\varphi: A\rightarrow B$ be a flat homomorphism of Noetherian local rings. Suppose the homomorphism is Weierstrass. Then it satisfies both the going-up and going-down theorems.
\end{corollary}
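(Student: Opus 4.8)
The plan is to show that, for a flat Weierstrass local homomorphism $\varphi\colon A\to B$, the induced map on spectra is an order isomorphism, after which both going-up and going-down drop out as purely formal consequences. Concretely, I would introduce the two maps $F\colon\Spec(B)\to\Spec(A)$, $P\mapsto\varphi^{-1}(P)$, and $G\colon\Spec(A)\to\Spec(B)$, $\fp\mapsto\fp B$, and prove that they are mutually inverse, inclusion-preserving bijections.

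First I would assemble the two ingredients supplied by the earlier results. Since $\varphi$ is Weierstrass, Theorem \ref{23}(a) gives $P=\varphi^{-1}(P)B$ for every $P\in\Spec(B)$; this says precisely that $G\circ F=\mathrm{id}_{\Spec(B)}$, so $F$ is injective. For the reverse composite I would use that $\varphi$ is a \emph{local} homomorphism: then $\varphi(\fm_A)\subseteq\fm_B$, whence $\fp B\subseteq\fm_A B\subseteq\fm_B$ is a proper ideal for every prime $\fp$ of $A$. By Remark \ref{24}(a2) a proper ideal of the form $\fp B$ is prime, so $G$ is well defined, and the description in Remark \ref{24} of the minimal primes over $\fp B$ (which now equals $\fp B$ itself) forces $\varphi^{-1}(\fp B)=\fp$; that is, $F\circ G=\mathrm{id}_{\Spec(A)}$. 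As $\fp_1\subseteq\fp_2$ implies $\fp_1 B\subseteq\fp_2 B$, and $P_1\subseteq P_2$ implies $\varphi^{-1}(P_1)\subseteq\varphi^{-1}(P_2)$, both $F$ and $G$ preserve inclusions, so they are inverse order isomorphisms of $\Spec(A)$ and $\Spec(B)$.

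With this order isomorphism in hand I would conclude formally. For going-up, given $\fp_1\subseteq\fp_2$ in $A$ and $P_1\in\Spec(B)$ with $\varphi^{-1}(P_1)=\fp_1$, I put $P_2=\fp_2 B$; then $P_1=G(\fp_1)=\fp_1 B\subseteq\fp_2 B=P_2$ and $\varphi^{-1}(P_2)=\fp_2$. For going-down, given the same $\fp_1\subseteq\fp_2$ and $P_2$ lying over $\fp_2$, I put $P_1=\fp_1 B$, so that $P_1\subseteq\fp_2 B=G(\varphi^{-1}(P_2))=P_2$ and $\varphi^{-1}(P_1)=\fp_1$.

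The step I expect to be the genuine obstacle—and the only place where the full strength of the hypotheses is used—is the properness $\fp B\neq B$ for every prime $\fp$, equivalently the surjectivity of $F$. This is exactly where being a local homomorphism (so that $\fm_A B\subseteq\fm_B$, giving faithful flatness) is indispensable, and it cannot be weakened to flatness alone. Indeed, going-down holds for any flat homomorphism, so the real content here is going-up; and a flat Weierstrass map that is not local, such as a localization $A\to A_\fp$, still satisfies Theorem \ref{23}(a) yet fails going-up precisely because $\fp_2 B$ may be the unit ideal. I would therefore take care to invoke the local-homomorphism hypothesis at this point rather than appealing to flatness by itself.
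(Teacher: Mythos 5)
Your proof is correct, and its core coincides with the paper's: both arguments hinge on Theorem \ref{23}(a) together with Remark \ref{24}, which jointly make $\fq B$ the unique prime of $B$ lying over a prime $\fq$ of $A$. The differences are in packaging and in one point of care. The paper splits the statement: going-down is simply quoted from flatness (\cite[Theorem 9.5]{mat1}), and only going-up is argued, by putting $Q=\fq B$ and invoking Remark \ref{24}; you instead assemble contraction and extension into mutually inverse order isomorphisms $F\colon\Spec(B)\to\Spec(A)$ and $G\colon\Spec(A)\to\Spec(B)$ and deduce both lifting properties formally. Your route buys a stronger intermediate conclusion (the induced map on spectra is an order isomorphism, hence bijective) and a self-contained going-down; the paper's route buys brevity. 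The genuinely valuable point in your write-up is the properness issue: Remark \ref{24}(a2) only says that $\fq B$ is either the unit ideal or prime, and the paper's proof silently discards the first alternative when it declares $Q=\fq B$ prime. This is legitimate only because the homomorphism is local, so that $\fq B\subseteq\fm_A B\subseteq\fm_B$; your localization example $A\to A_\fp$ (flat, Weierstrass, both rings local, yet going-up fails because $\fp_2 B$ can be the unit ideal) shows this hypothesis cannot be weakened, and that ``homomorphism of Noetherian local rings'' in the statement must be read as ``local homomorphism.'' You put the weight of the argument exactly where the paper leaves a tacit assumption.
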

\begin{proof}
The flatness of $\varphi$ implies that it satisfies the going-down theorem by \cite[Theorem 9.5]{mat1}. For the going-up theorem, let $\fp\subset \fq$ be two prime ideals of $A$ and let $P$ be a prime ideal of  $B$ such that $P\cap A=\fp$. We need to show that there is a prime ideal $Q$ of $B$ such that $P\subset Q$ and $Q\cap A=\fq$. 

Put $Q=\fq B$. Since the homorphism $\varphi$ is Weierstrass, $Q$ is a prime ideal of $B$ as we have seen in Remark \ref{24}. Similarly, $P=\fp B$. In other words, $P$ and $Q$ are unique point in the fibers of $\varphi$ at $\fp$ and $\fq$ respectively. Then $Q\supset P$ by the going-down theorem. The conclusion follows trivially.
\end{proof}

We end this section with a simple remark that will be used in several places later on.
\begin{remark} \label{27}
Let $\varphi: A\rightarrow B$ be a flat homomorphism of Noetherian rings. The ring $B$ might not be local but we assume that $\varphi$ satisfies the sufficient condition in Theorem \ref{23}, that means, any prime ideal $P\in \Spec(B)$ satisfies

(a) $P=\varphi(\varphi^{-1}(P))B$;

(b) Either $B_P$ is a principal ideal ring or the canonical inclusion $k_A(\varphi^{-1}(P))\hookrightarrow k_B(P)$ is an isomorphism.

For any prime ideal $Q$ of $B$, denote the composition homomorphism of $\varphi$ and the localization map $B\rightarrow B_Q$ by $\varphi_Q: A\rightarrow B_Q$. Then $\varphi_Q$ is flat. It is easy to see that $\varphi_Q$ satisfies similar conditions as (a) and (b). So by Theorem \ref{23}, $\varphi_Q$ is Weierstrass.
\end{remark}


\section{Functions on curves}
\label{sect3}

In this section we consider rings of functions on a projective curve over a complete local domain of dimension one. Those curves contain all curves over a complete discrete valuation ring. Using Theorem \ref{23} we prove some interesting Weierstrass extensions of these rings. It generalizes the results of Harbater-Hartmann \cite{hh} and Harbater-Hartmann-Krashen in \cite{hhk1, hhk2} which in fact motivate our work in this paper.

\begin{notation}\label{31} To fix the notations, let $(R,\fm_R)$ be a complete Noetherian local domain of dimension one. We consider a connected projective normal curve $\mathcal X$ over $R$ with closed fiber $X$ and function field $F$. Assume $\mathcal X$ dominates $\Spec(R)$. Given a subset  $U\subset X$ which is contained in an affine set, let $R_U$ be the set of rational functions in $F$ which are regular on $U$. We denote by $\widehat R_U$ the $\fm_R R_U$-adic completion of the ring $R_U$.
\end{notation}

By the choice of the subset $U$, the ideal $\fm_R R_U$ is contained in the Jacobson radical of $R_U$ and the canonical homomorphism $R_U\rightarrow \widehat R_U$ is faithfully flat.

We will explore some Weierstrass extensions of $R_U$ in the following three situations

(i) $U$ consists of a closed point of $X$;

(ii) $U$ is an open subset of an irreducible component of $X$ and $U$ does not intersect with other components;

(iii) $U$ is general.

The first two situations have been worked out by Harbater, Hartmann and Krashen using the techniques of the patching theory (see \cite{hh}, \cite{hhk1}, \cite{hhk2}). Also they work on the field of fractions of $R_U$ rather than on $R_U$. Here we work on the level of rings. The main idea is to use Theorem \ref{23}, which in fact enables us to study in addition the third case of a general subset $U$ too.

Let $\varphi: A \rightarrow B$ be a Weierstrass flat homomorphism. We have seen in Lemma \ref{21} and Theorem \ref{23} that if $P$ is a prime ideal of $B$ then $P=\varphi(\varphi^{-1}(P))B$. We study first this property for the ring $R_U$.

\begin{proposition}\label{32} 
Keep the notations in \ref{31}. For any prime ideal $\fp$ of $R_U$, $\fp\not=0$, the canonical embedding $R_U/\fp \hookrightarrow \widehat{R}_U/\fp\widehat R_U$ is an isomorphism. In particular, $\fp\widehat R_U$ is a prime ideal of $\widehat R_U$. Conversely, for a prime ideal $Q$ of $\widehat R_U$ which is not the generic point of any irreducible component of $\Spec(\widehat R_U)$, denote $\fq=Q\cap R_U$, then $Q=\fq\widehat R_U$ and $k_{R_U}(\fq)\simeq k_{\widehat{R}_U}(Q)$.
\end{proposition}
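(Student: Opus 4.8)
The plan is to deduce the entire statement from the single assertion that \emph{for every nonzero prime $\fp$ of $R_U$ the quotient $R_U/\fp$ is already $\fm_R$-adically complete}. Indeed, since $R_U$ is Noetherian and $R_U\to\widehat R_U$ is the $\fm_R R_U$-adic completion, one has $\widehat R_U/\fp\widehat R_U\cong (R_U/\fp)\otimes_{R_U}\widehat R_U$, which is the $\fm_R$-adic completion of $R_U/\fp$; as $\fm_R R_U$ lies in the Jacobson radical, $R_U/\fp$ is separated, so the canonical map $R_U/\fp\hookrightarrow\widehat R_U/\fp\widehat R_U$ is an isomorphism exactly when $R_U/\fp$ is complete, and in that case $\widehat R_U/\fp\widehat R_U$ is a domain, i.e. $\fp\widehat R_U$ is prime. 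First I would dispose of the case $\fm_R R_U\subseteq\fp$ (which includes every maximal ideal, since $\fm_R R_U$ is in the Jacobson radical): here $\fm_R$ acts as zero on $R_U/\fp$ and the completion is trivial. This leaves the horizontal primes $\fp$ with $\fm_R R_U\not\subseteq\fp$; such a $\fp$ has height one and $R_U/\fp$ is a one-dimensional domain in which the image of any $0\neq t\in\fm_R$ is a nonzerodivisor.

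For a horizontal $\fp$ I would argue geometrically. The prime $\fp$ is the generic point of an integral curve on $\Spec R_U$ whose closure $C$ in $\cX$ is a one-dimensional integral closed subscheme dominating $\Spec R$. Being closed in the projective (hence proper) $\cX$, the morphism $C\to\Spec R$ is proper; since $C$ is not contained in the closed fiber it has finite fibers, so it is finite. Thus $\Gamma(C,\cO_C)$ is a domain finite over the complete local ring $R$, hence itself a complete local domain with a single point over $\fm_R$. Identifying $R_U/\fp$ with the ring of functions on $C$ regular along $U\cap C$, and noting that the unique closed-fibre point of $C$ is the relevant point of $U$, one sees that $R_U/\fp$ equals this complete local ring. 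In particular $R_U/\fp$ is $\fm_R$-adically complete, which settles the first statement and the primality of $\fp\widehat R_U$.

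For the converse, let $Q$ be a prime of $\widehat R_U$ that is not a minimal prime, and set $\fq=Q\cap R_U$. Granting for the moment that $\fq\neq0$, the first statement gives $\widehat R_U/\fq\widehat R_U\cong R_U/\fq$, whence the fiber $\widehat R_U\otimes_{R_U}k_{R_U}(\fq)$ equals the field $k_{R_U}(\fq)$. Therefore $\fq\widehat R_U$ is the \emph{unique} prime lying over $\fq$, which forces $Q=\fq\widehat R_U$, and then $k_{\widehat R_U}(Q)=\Frac(\widehat R_U/\fq\widehat R_U)=\Frac(R_U/\fq)=k_{R_U}(\fq)$. So the whole converse reduces to showing $\fq\neq0$, equivalently that the generic fiber of $R_U\to\widehat R_U$ is zero-dimensional.

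This last reduction is where I expect the main obstacle to lie, since it amounts to excluding ``transcendental horizontal branches'' in the formal neighbourhood of the closed fiber. Suppose for contradiction that some non-minimal $Q$ has $Q\cap R_U=0$, and put $T=\widehat R_U/Q$, a domain. As $t\in R_U\setminus\{0\}$ we have $t\notin Q$, and $Q$ cannot be contained in any minimal prime of $\fm_R\widehat R_U$ (each such prime has height one and meets $R_U$ in $\fm_R R_U\neq0$, so containment would force $Q$ to be that prime, contradicting $Q\cap R_U=0$). Hence the image of $Q$ in $\widehat R_U/\fm_R\widehat R_U=R_U/\fm_R R_U$ is nonzero and lies in no minimal prime, so $T/\fm_R T$ is zero-dimensional; since $T$ is a quotient of the $\fm_R$-adically complete ring $\widehat R_U$ and is a domain, it is in fact a one-dimensional complete \emph{local} domain. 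Its residue field $T/\fm_T$ arises from a maximal ideal of $\widehat R_U$, i.e. from a closed point of the curve $X$, and so is finite over $R/\fm_R$. By the Cohen structure theorem $T$ is module-finite over a one-dimensional complete regular local subring with the same residue field, containing a nonzero $t\in\fm_R$ as parameter, whence $\operatorname{trdeg}_{\Frac R}\Frac T=\operatorname{trdeg}_{R/\fm_R}(T/\fm_T)=0$. But $R_U\hookrightarrow T$ gives $\Frac R_U\subseteq\Frac T$ with $\operatorname{trdeg}_{\Frac R}\Frac R_U=1$, a contradiction. Thus $\fq\neq0$ and the proof is complete. The delicate points to verify carefully are the identification $\widehat R_U/\fp\widehat R_U\cong\widehat{R_U/\fp}$, the finiteness of the horizontal curves over the complete base, and the transcendence-degree bookkeeping in the Cohen step.
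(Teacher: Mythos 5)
Your strategy for the forward direction is genuinely different from the paper's and is essentially viable. The paper stays purely algebraic: it shows $R_U/(\fp+\fm_R R_U)$ is Artinian, invokes the Nullstellensatz to get residue fields finite over $k$, and then applies Cohen's lifting-of-generators theorem to conclude that $\widehat R_U/\fp\widehat R_U$ is generated as an $R$-module by finitely many elements coming from $R_U$, forcing $R_U/\fp=\widehat R_U/\fp\widehat R_U$. You instead pass to the closure $C$ of $\fp$ in $\cX$ and use properness: $C\to\Spec(R)$ is proper with finite fibers, hence finite, so $\Gamma(C,\cO_C)$ is a complete local domain finite over $R$. Both proofs come down to the same core fact, that $R_U/\fp$ is module-finite over $R$ and therefore $\fm_R$-adically complete; your geometric route to it is attractive. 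Two caveats, though. First, you assert the equality $R_U/\fp=\Gamma(C,\cO_C)$; what restriction of functions actually gives is an embedding $R_U/\fp\hookrightarrow\Gamma(C,\cO_C)$, and surjectivity (extending an arbitrary regular function on $C$ to an element of $R_U$) is neither proved nor needed --- the embedding already exhibits $R_U/\fp$ as an $R$-submodule of a finite $R$-module, hence finite over $R$ and complete. Second, even the embedding requires that the unique closed point of $C$ lie in $U$, equivalently that every maximal ideal of $R_U$ containing $\fp$ comes from a point of $U$; you assert this without justification. It is true, and of the same nature as the background fact the paper uses (residue fields of maximal ideals of $R_U$ are finite over $k$), but since $U$ is an arbitrary subset of $X$ it deserves an argument.

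The genuine gap is the ``Cohen step'' at the end of your converse. The Cohen structure theorem makes $T=\widehat R_U/Q$ module-finite over some one-dimensional complete regular local subring $A_0$ with residue field $\kappa_T:=T/\fm_T$, but $A_0$ is built from a coefficient field (or coefficient ring) of $T$, which is non-canonical and need not contain, nor even be algebraic over, $R$; in residue characteristic $p$ one cannot in general choose coefficient fields of $T$ compatibly with one of $R$. Moreover $\Frac(A_0)$ is a Laurent-series field: it contains series $\sum u_it^i$ that are not generated by $\kappa_T$ and $t$ in any algebraic sense, so finiteness of $\kappa_T$ over $k$ does not by itself make $\Frac(A_0)$ algebraic over $\Frac(R)$. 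Consequently the asserted identity $\operatorname{trdeg}_{\Frac R}\Frac(T)=\operatorname{trdeg}_{k}(T/\fm_T)$ does not follow from what you cite; as a general principle it is false: for $R=k[[t]]\subset T=k'[[t]]$ with $k'/k$ an infinite algebraic extension the right-hand side is $0$ while the left-hand side is positive. (Under your hypothesis $[\kappa_T:k]<\infty$ the conclusion is true, but proving it is precisely the point at issue.)

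The repair is exactly the tool the paper uses, and it bypasses structure theory entirely: Cohen's theorem on lifting generators over a complete ring (\cite[Theorem 8.4]{mat1}). Your own intermediate steps show that $T$ is a one-dimensional complete local domain, that $T$ is $\fm_R$-adically complete and separated, and that $T/\fm_R T$ is Artinian with residue field finite over $k$, hence finite-dimensional as a $k$-vector space. Lifting a generating set then shows $T$ is a finite $R$-module, so $\Frac(T)$ is a finite extension of $K=\Frac(R)$; this contradicts $F=\Frac(R_U)\subseteq\Frac(T)$ together with $\operatorname{trdeg}_K F=1$, and your converse goes through. This finiteness over $R$ is exactly what the paper's proof extracts --- both for the direct statement and, applied to $Q$ in place of $\fp\widehat R_U$, for the converse, which is how the paper gets $\fq\neq 0$ without any separate argument.
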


\begin{proof} For the first assertion, let $\fp\not=0$ be a prime ideal of $R_U$. By the choice of $U$, the ideal $\fp+\fm_RR_U$ is contained in some maximal ideal of $R_U$. If $\fp\supseteq \fm_R$ then $R_U/\fp= \widehat{R}_U/\fp \widehat{R}_U$. It particularly holds true for any maximal ideal of $R_U$. 

Suppose $\fp\not\supseteq \fm_R$. Since $R_U$ has Krull dimension $2$ and $\fp\not=0$, $R_U/(\fp+\fm_RR_U)$ is Artinian. Hence $\widehat R_U/(\fp\widehat R_U+\fm_R\widehat R_U)\simeq R_U/(\fp+\fm_RR_U)$ is also Artinian. In other words, $\sqrt{\fp\widehat R_U+\fm_R\widehat{R}_U}=\hat \fm_1\cap\ldots\cap \hat \fm_h$ for some maximal ideals $\hat \fm_1, \ldots, \hat \fm_h$ of $\widehat R_U$. Let $\fm_i=R_U\cap \hat \fm_i$, $i=1, \ldots, h$. Since $\fm_i\supseteq \fm_R$, $R_U/\fm_i=\widehat{R}_U/\fm_i\widehat{R}_U$. This shows that $\hat \fm_i=\fm_i\widehat{R}_U$ and $\fm_i$ is a maximal ideal of $R_U$, $i=1, \ldots, h$. From the equality $\sqrt{\fp\widehat R_U+\fm_R\widehat{R}_U}=\fm_1\widehat R_U\cap\ldots\cap \fm_h\widehat R_U=(\fm_1\cap \ldots \cap \fm_h)\widehat R_U$, there is an $n>0$ such that
$$(\fm_1\cap \ldots \cap \fm_h)^{n+1}\widehat R_U=(\fm_1\widehat R_U\cap \ldots\cap \fm_h\widehat R_U)^{n+1}\subseteq \fp\widehat R_U+\fm_R\widehat R_U.$$
On the closed fiber $X$ of the curve $\cX$, the residue field $\widehat R_U/\widehat \fm_i\simeq R_U/\fm_i$ is a finite extension of $k:=R/\fm_R$ by the geometric formulation of Hilbert's Nullstellensatz (see \cite[Proposition 3, page 99]{Mum}). So we can write 
$$\widehat R_U/\widehat \fm_i=k\bar w_{i1}+\ldots+k\bar w_{it},$$
for some $w_{i1}, \ldots, w_{it}\in R_U$, $t>0$. The isomorphisms
$$\widehat R_U/(\fm_1\cap\ldots\cap \fm_h)\widehat R_U
\simeq \widehat R_U/(\widehat \fm_1\cap \ldots \cap \widehat \fm_h)
\simeq \widehat R_U/\widehat \fm_1\oplus \ldots \oplus \widehat R_U/\widehat \fm_h,$$
give us
\[\begin{aligned}
\widehat R_U&=Rw_{11}+\ldots+Rw_{1t}+\ldots +Rw_{h1}+\ldots+Rw_{ht}+(\fm_1\cap \ldots \cap\fm_h)\widehat R_U\\
&=R[w_{ij}] + (f_1, \ldots, f_r)\widehat R_U,
\end{aligned}\]
 where $R[w_{ij}]$ denotes the $R$-module generated by $w_{11}, \ldots, w_{ht}$ and $f_1, \ldots, f_r$ are rational functions on $\cX$ that generate $\fm_1\cap\ldots\cap \fm_h$. Denote
$$R[w_{ij}][f_1, \ldots, f_r]_{\leq s}:=\sum_{\substack{1\leq i\leq h,\, 1\leq j\leq t\\ \alpha_1+\ldots+\alpha_r\leq s}}Rw_{ij}f_1^{\alpha_1}\ldots f_r^{\alpha_r}.$$
Then 
\[\begin{aligned}
\widehat R_U&=R[w_{ij}][f_1, \ldots, f_r]_{\leq 1} + (f_1,\ldots, f_r)^2\widehat R_U\\
&=\ldots\\
&=R[w_{ij}][f_1, \ldots, f_r]_{\leq n} + (f_1,\ldots, f_r)^{n+1}\widehat R_U\\
&=R[w_{ij}][f_1, \ldots, f_r]_{\leq n} + (\fp\widehat R_U+\fm_R\widehat R_U).
\end{aligned}\]
Let's denote $M=\widehat R_U/\fp\widehat R_U$. We have $M=R[w_{ij}][\bar f_1, \ldots, \bar f_r]_{\leq n}+\fm_RM$. Consequently, the $k$-vector space $M/\fm_RM$ is generated by $\bar w_{ij}\bar f_1^{\alpha_1}\ldots \bar f_r^{\alpha_r}$, for $1\leq i\leq h, 1\leq j\leq t, \alpha_1+\ldots+\alpha_r\leq n$. Since $R$ is $\fm_R$-adically complete, $M$ is separated with respect to the $\fm_RR_U$-adic topology. A theorem of Cohen (see \cite[Theorem 30.6]{nag} or \cite[Theorem 8.4]{mat1}) concludes that the set of generators $\bar w_{ij}\bar f_1^{\alpha_1}\ldots \bar f_r^{\alpha_r}$ could be lifted over $R$, that is, $M=R[w_{ij}][\bar f_1, \ldots, \bar f_r]_{\leq n}$. Hence
$$M\subseteq R_U/\fp\subseteq \widehat R_U/\fp\widehat R_U=M,$$
and therefore $R_U/\fp=\widehat R_U/\fp\widehat R_U$. In particular, $\fp\widehat R_U$ is a prime ideal of $\widehat R_U$.

For the converse, let $Q$ be a prime ideal of $\widehat R_U$ which is not minimal. Put $\fq=Q\cap R_U$. By the same proof as the first part in which we replace $\fp\widehat R_U$ by $Q$, we get that $R_U/\fq\simeq \widehat R_U/Q$. In particular, $\fq\not=0$. As we have proved above, $R_U/\fq\simeq \widehat R_U/\fq\widehat R_U$. So $Q=\fq\widehat R_U$ and $k_{R_U}(\fq)\simeq  k_{\widehat R_U}(Q)$.
\end{proof}

Proposition \ref{32} provides us a nice 1-1 correspondence between the sets of non-minimal prime ideals of $R_U$ and $\widehat R_U$. As a particular consequence, it induces that all fibers of the completion homomorphism $R_U\rightarrow \widehat R_U$ have dimension zero. This description shows part of the sufficient condition in Theorem \ref{23} (see Remark \ref{24}, condition (a1)). Using this we are able to give another proof for one of the main theorems of \cite{hhk2} (see also \cite{hh}, \cite{hhk1}) and also to study the general case where the set $U$ contains points on different irreducible components of the closed fiber of the curve.

\begin{theorem}{(See also \cite[Theorem 3.1]{hhk2})}\label{33}
Keep the notations in \ref{31}. Let $U$ be either 

(a) A set consisting of one closed point of the closed fiber $X$; or

(b) An open subset of an irreducible component of $X$ which does not intersect other components.

\noindent For each prime ideal $Q$ of $\widehat R_U$, the canonical inclusion $R_U\hookrightarrow (\widehat R_U)_Q$ is Weierstrass.
\end{theorem}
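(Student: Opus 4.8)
The plan is to reduce the statement entirely to the characterization in Theorem~\ref{23}. Since $\varphi\colon R_U\to\widehat R_U$ is faithfully flat and both rings are Noetherian, Remark~\ref{27} tells me that it suffices to verify that $\varphi$ satisfies conditions (a) and (b) of Theorem~\ref{23} at \emph{every} prime $P$ of $\widehat R_U$; granting this, Remark~\ref{27} produces for each prime $Q$ of $\widehat R_U$ a Weierstrass homomorphism $R_U\to(\widehat R_U)_Q$, which is exactly the assertion. So the entire proof becomes the verification of (a) and (b) for $\varphi$.

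For a non-minimal prime $P$ of $\widehat R_U$ this verification is already done in Proposition~\ref{32}. Indeed, putting $\fp=P\cap R_U$, that proposition gives $P=\fp\widehat R_U=\varphi(\varphi^{-1}(P))\widehat R_U$, which is condition (a), together with the residue-field isomorphism $k_{R_U}(\fp)\simeq k_{\widehat R_U}(P)$, which is condition (b). So the only primes left to treat are the minimal ones, and this is precisely where the special shape of $U$ in cases (a) and (b) must enter.

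The key point I would isolate as a lemma is that in cases (a) and (b) the ring $\widehat R_U$ is a normal domain; equivalently, it has the single minimal prime $(0)$. I would prove this in two steps. First, $R_U$ is a normal excellent ring: it is essentially of finite type over the complete local domain $R$, hence excellent (so a $G$-ring), and it is normal since $\cX$ is normal and $R_U=\bigcap_{x\in U}\cO_{\cX,x}$ is an intersection of normal local domains inside $F$. Because $R_U$ is a $G$-ring, the completion map $R_U\to\widehat R_U$ is regular, so $\widehat R_U$ is again normal, and therefore a finite product of normal domains, one for each connected component of $\Spec\widehat R_U$. Second, $\widehat R_U$ is connected: the ideal $\fm_R\widehat R_U$ lies in the Jacobson radical and $\widehat R_U$ is $\fm_R$-adically complete, so its idempotents are detected on the special fibre $\widehat R_U/\fm_R\widehat R_U\simeq R_U/\fm_RR_U$, and this fibre is connected because in case (a) it is local, while in case (b) its reduction is the coordinate ring of the irreducible set $U$. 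Hence $\widehat R_U$ is a connected normal ring, i.e.\ a domain.

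Once the lemma is available the minimal prime is harmless: the unique minimal prime is $(0)$, condition (a) reads $(0)=\varphi(\varphi^{-1}((0)))\widehat R_U=(0)$ and holds because $\widehat R_U$ is a domain, while condition (b) holds because $(\widehat R_U)_{(0)}=\Frac(\widehat R_U)$ is a field, hence a principal ideal ring. Together with the non-minimal case this checks (a) and (b) at all primes, and Remark~\ref{27} concludes. I expect the lemma to be the main obstacle: everything else is bookkeeping on top of Proposition~\ref{32}, whereas forcing $\widehat R_U$ to have a \emph{single} minimal prime is exactly the content of the connectedness/normality input. If $U$ spread over several components the special fibre would be disconnected, $\widehat R_U$ would split as a genuine product of normal domains with several minimal primes, and condition (a) would break for $\varphi$; controlling this branching is precisely what distinguishes cases (a) and (b) from the general situation.
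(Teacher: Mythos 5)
Your overall skeleton is exactly the paper's: reduce via Remark \ref{27} to checking conditions (a) and (b) of Theorem \ref{23} at every prime of $\widehat R_U$, invoke Proposition \ref{32} for the non-minimal primes, and observe that everything then hinges on the single remaining claim that $\widehat R_U$ is a domain (so that the unique minimal prime is $(0)$, where (a) is trivial and (b) holds because $(\widehat R_U)_{(0)}$ is a field). Up to this point, and in case (a), your argument is sound: there $R_U=\cO_{\cX,x}$ is visibly essentially of finite type over $R$, hence excellent, normality ascends along the regular completion map, and the special fibre is local, so the idempotent-lifting argument closes the case. This is a mild variant of what the paper does for case (a) (the paper instead passes to the $\fm_x$-adic completion $\widetilde R_x$, which is reduced by excellence and a domain by unibranchness, and uses $\widehat R_x\subseteq\widetilde R_x$).

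The genuine gap is in case (b), and it sits precisely at the point you yourself identify as the crux. Your proof of the key lemma there rests on two assertions that are stated without proof and are not routine: first, that $R_U$ is essentially of finite type over $R$ (hence excellent, so that normality ascends to $\widehat R_U$); second, that the reduction of the special fibre $R_U/\fm_RR_U$ is the coordinate ring of $U$ (hence connected). When $U$ is an infinite set neither claim is formal. Both amount to exhibiting enough functions that vanish at the finitely many points of $X_1\setminus U$ (and along the other components) without vanishing anywhere on $U$ --- equivalently, to producing an affine open $W\subseteq\cX$ with $W\cap X=U$. This is a real geometric input: for instance, $R_U$ need \emph{not} be a localization of $\cO_\cX(V)$ for an arbitrary affine open $V\supseteq U$ (already for a constant elliptic fibration over $\bC[[t]]$ and $U$ the complement of a non-torsion point, divisor-class obstructions rule this out), so ``regular functions on $U$'' cannot be identified with an obvious localization; one needs to lift suitable divisors or line bundles from the closed fibre $X$ to $\cX$ to cut out $X\setminus U$. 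Nothing in your write-up supplies this. The paper circumvents exactly this difficulty by a different, cheaper device: it fixes a closed point $x\in U$ and uses the injections $R_U/\fm_R^nR_U\hookrightarrow R_x/\fm_R^nR_x$ (which is where the hypothesis that $U$ avoids the other components enters) to embed $\widehat R_U$ into $\widehat R_x$, so that case (b) follows from the already-settled one-point case. To repair your proof, either supply the geometric lemma (existence of an affine $W$ with $W\cap X=U$, or equivalently enough $U$-unit functions vanishing on $X\setminus U$), or replace your connectedness-plus-excellence step by the paper's embedding $\widehat R_U\hookrightarrow\widehat R_x$.
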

\begin{proof}
We first prove that $\widehat R_U$ is a domain. \smallskip

\noindent (a) Let $U=\{ x\}$ where $x$ is a closed point of the closed fiber $X$ of the curve $\cX$. We will write $R_x$ instead of $R_U$. Let $\widetilde{R}_x$ be the completion of the ring $R_x$ with respect to its maximal ideal. We can write $R_x\subset \widehat R_x \subset \widetilde{R}_x$ as faithfully flat extensions. The ring $R_x$ is excellent since it is essentially of finite type over the complete local ring $R$, following \cite[Scholie 7.8.3]{ega1965}. It implies in particular that $\widetilde{R}_x$ is reduced. On the other hand, $R_x$ is a normal ring since $\cX$ is normal, thus $R_x$ is particularly unibranch. The ring $\widetilde{R}_x$ is then a domain as it is both reduced and unibranch. So $\widehat{R}_x$ is a domain. This proves the assertion in the first case.\smallskip

\noindent (b) Now let $U$ be an open subset of a component of $X$ which does not intersect other components. Let $x\in U$ be a closed point. The inclusion $R_U/\fm_R^n R_U\hookrightarrow R_x/\fm_R^n R_x$ gives rise to an inclusion $\widehat R_U\hookrightarrow \widehat R_x$. So the assertion follows from the first case.

Combining this with Proposition \ref{32}, we see that the canonical inclusion $R_U\subset \widehat R_U$ satisfies

(i) For any prime ideal $P$ of $\widehat R_U$, $P=(P\cap R_U)\widehat R_U$;

(ii) For any non-zero prime ideal $P$ of $\widehat R_U$, $k_{\widehat R_U}(P)=k_{R_U}(P\cap R_U)$.

\noindent Therefore, with a prime ideal $Q$ of the ring $\widehat R_U$, Theorem \ref{23} and Remark \ref{27} imply that the extension $R_U \subset (\widehat R_U)_Q$ is Weierstrass.
\end{proof}

\begin{remark}\label{34} 
In the proof of Theorem \ref{33} it is shown that the canonical inclusion $R_U\subset \widehat R_U$ fulfills all the conditions in Theorem \ref{23} except $\widehat R_U$ is not local in general. So Theorem \ref{23} is not applicable for this extension (see also the discussion in Remark \ref{27}). In fact, we do not know whether the extension $R_U\subset \widehat R_U$ itself is Weierstrass or not.
\end{remark}

In Theorem \ref{33}, if $U=\{x\}$ consists of just one closed point, the ring $R_x$ is local with a maximal ideal $\fm_x$. Beside the extension $R_x\subset \hat R_x$, there is another extension $R_x\subset \widetilde R_x$ - the $\fm_x$-adic completion. It is natural to ask whether this extension is Weierstrass. The answer is positive with some mild restrictions.

\begin{theorem}\label{35}
Keep the notations in \ref{31}. Let $x$ be a closed point of the closed fiber $X$ of the curve $\cX$. For each irreducible component $X_i$ of $X$ which contains $x$, assume that the ring $\cO_{X_i, x}$ is analytically unramified and unibranch, or equivalently, its  $\fm_x$-adic completion is a domain (this is the case if $x$ is a non-singular point of $X_i$). Then the $\fm_x$-adic completion inclusion $R_x\subset \widetilde R_x$ is Weierstrass.
\end{theorem}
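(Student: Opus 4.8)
The plan is to apply Corollary~\ref{25} to the flat extension $R_x\subseteq \widetilde R_x$ of Noetherian local domains, checking its two conditions prime by prime. First I record the ambient facts. The ring $R_x=\cO_{\cX,x}$ is excellent (essentially of finite type over the complete local ring $R$) and normal (since $\cX$ is normal), so its $\fm_x$-adic completion $\widetilde R_x$ is again normal; being local it is therefore a normal domain, two-dimensional, with $R_x\to\widetilde R_x$ faithfully flat and $\widetilde \fm_x=\fm_x\widetilde R_x$. For the prime $P=0$ the localization is a field and $0=0\cdot\widetilde R_x$; for $P=\widetilde\fm_x$ we have $P=\fm_x\widetilde R_x$ and $\widehat{(R_x)_{\fm_x}}=\widetilde R_x=\widehat{(\widetilde R_x)_P}$, so $(R_x)_{\fm_x}$ and $(\widetilde R_x)_P$ are analytically isomorphic. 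Thus only the height-one primes require work, and for these $(\widetilde R_x)_P$ is a discrete valuation ring because $\widetilde R_x$ is normal, so condition (b) of Corollary~\ref{25} holds automatically. Everything reduces to proving, for each height-one prime $P$ of $\widetilde R_x$, that $P=\fp\widetilde R_x$ with $\fp:=P\cap R_x$.

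I would split this into two claims: that $\fp\ne 0$, and that $\fp\widetilde R_x$ is prime. For the first claim I would exhibit a nonzero element of $R_x$ in $P$ by means of the classical Weierstrass preparation theorem over the complete base. Choose $s\in\fm_x\subseteq R_x$ whose image is a parameter in $\cO_{X,x}$ (prime avoidance over the finitely many minimal primes of the closed fibre); then $\widehat{\cO_{X,x}}=\widetilde R_x/\fm_R\widetilde R_x$ is finite over $k[[\bar s]]$, and lifting this finiteness over the complete ring $R$ shows $\widetilde R_x$ is module-finite over the regular subring $R[[s]]\simeq R[[T]]$. If $P\supseteq\fm_R\widetilde R_x$ then already $\fm_R\subseteq\fp$ and $\fp\ne 0$. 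Otherwise $\fq:=P\cap R[[s]]$ is a height-one, hence principal, prime $(g)$ with $g\notin\fm_RR[[s]]$; Weierstrass preparation replaces $g$ by an associate distinguished polynomial $h\in R[s]\subseteq R_x$, and $h\in P\cap R_x$ is nonzero, so $\fp\ne 0$. In either case $\fp$ is a height-one prime of $R_x$.

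For the second claim note that $\widetilde R_x/\fp\widetilde R_x\simeq\widehat{R_x/\fp}$, so $\fp\widetilde R_x$ is prime exactly when the one-dimensional local domain $R_x/\fp$ is analytically irreducible, and here a dichotomy on $\fp$ is the point. If $\fp\not\supseteq\fm_R$, the curve $V(\fp)\subseteq\cX$ dominates $\Spec R$; being closed in the projective $R$-scheme $\cX$ with finite fibres, $V(\fp)\to\Spec R$ is finite, so $R_x/\fp$ is a finite module over the complete ring $R$ and is therefore itself complete, hence equal to its own completion and analytically irreducible. If $\fp\supseteq\fm_R$, then $V(\fp)$ is an irreducible component $X_i$ of the closed fibre through $x$ and $R_x/\fp=\cO_{X_i,x}$, which is analytically irreducible precisely by the hypothesis of the theorem. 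In both cases $\fp\widetilde R_x$ is prime; having height one and being contained in the height-one prime $P$, it equals $P$. This verifies both conditions of Corollary~\ref{25} for every prime of $\widetilde R_x$, so $R_x\subseteq\widetilde R_x$ is Weierstrass.

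The main obstacle is the first claim, namely that the generic formal fibre of $R_x\to\widetilde R_x$ is zero-dimensional: this is exactly where completeness of $R$ in the base direction is indispensable, entering through the module-finiteness of $\widetilde R_x$ over $R[[s]]$ and the classical Weierstrass preparation theorem. The second delicate point is the clean separation of height-one primes into the \emph{horizontal} ones, automatically analytically irreducible because of finiteness over the complete base, and the \emph{vertical} ones, which are the fibre components and are governed by the hypothesis; one must check that these two cases are exhaustive and that $V(\fp)\to\Spec R$ is genuinely finite in the horizontal case.
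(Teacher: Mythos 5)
Your proof takes a genuinely different route from the paper's and, up to one repairable error discussed below, it is correct. The paper factors the extension as $R_x\subset \widehat R_x\subset \widetilde R_x$ through the $\fm_R$-adic completion: the first inclusion is Weierstrass by Theorem \ref{33}, the second is shown to satisfy the hypotheses of Theorem \ref{23} (primality of extended ideals via the same horizontal/vertical dichotomy you use, zero-dimensionality of the fibers via a Cohen-type lifting argument as in Proposition \ref{32}, and normality of $\widetilde R_x$ for the principal/DVR alternative), and the composite is then Weierstrass since a composition of Weierstrass homomorphisms is Weierstrass (Example \ref{12}(d)). You instead verify Corollary \ref{25} directly for $R_x\subseteq\widetilde R_x$, with two inputs the paper never uses: module-finiteness of $\widetilde R_x$ over $R[[s]]$ combined with the classical Weierstrass preparation theorem, to show that every height-one prime of $\widetilde R_x$ contracts to a nonzero prime of $R_x$; and finiteness of a horizontal prime divisor over the complete base (projective with finite fibers implies finite), which makes $R_x/\fp$ itself complete in the horizontal case, where the paper instead uses that $\widehat R_x/\fp$ is already $\fm_R$-adically complete. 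The hypothesis on $\cO_{X_i,x}$ enters both proofs at exactly the same point, namely the vertical components. Your argument has the merit of being independent of Theorem \ref{33} and Proposition \ref{32}.

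The error: you assert that $R[[s]]\simeq R[[T]]$ is regular and that the height-one prime $\fq=P\cap R[[s]]$ is therefore principal. This fails unless $R$ itself is regular, i.e.\ a complete discrete valuation ring, whereas Notation \ref{31} only requires $R$ to be a complete Noetherian local domain of dimension one. Indeed, if $\fm_R$ is not principal, then $\fm_RR[[T]]$ is a height-one prime of $R[[T]]$ (its quotient is $k[[T]]$) which cannot be principal, since a generator would map to a generator of $\fm_R$ under $T\mapsto 0$; in particular $R[[T]]$ is not factorial in general. Fortunately your argument does not need a generator of $\fq$: it only needs some element $g\in\fq$ with $g\notin\fm_RR[[s]]$, and such a $g$ exists because $\fq$ and $\fm_RR[[s]]$ are distinct height-one primes (distinct since $\fq\not\supseteq\fm_R$ in the case at hand), so neither can contain the other. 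Weierstrass preparation over the complete local ring $R$ --- which holds with no regularity assumption --- applied to this $g$ gives $g=uh$ with $u$ a unit and $h\in R[s]$ a distinguished polynomial; then $h\in\fq\cap R[s]\subseteq P\cap R_x$ and $h\neq 0$ by injectivity of $R[[T]]\to\widetilde R_x$, so your first claim, and with it the rest of your proof, goes through unchanged.
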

\begin{proof}
We consider the extensions $R_x\subset \widehat R_x\subset \widetilde R_x$. The first inclusion is Weierstrass by Theorem \ref{33}. We need to show that the second inclusion is also Weierstrass. Note that the ring $\widetilde R_x$ is the $\fm_x\widehat R_x$-adic completion of $\widehat R_x$. 

We first show that if $\fp$ is a prime ideal of $\widehat R_x$, $\fp \widetilde R_x$ is a prime ideal. If either $\fp=\widehat\fm_x$ is the maximal ideal of $\widehat R_x$ or $\fp=0$ then this is clear. We assume $\fp\not= \widehat \fm_x$ and $\fp\not=0$. There are two cases to be considered separately. The first case is if $\fp\not\supseteq \fm_R$. The ring $\widehat R_x/\fm_R\widehat R_x$ is of Krull dimension one, thus  $\sqrt{\fm_R\widehat R_x+\fp}=\widehat \fm_x$. We have
$$\widetilde R_x/\fp \widetilde R_x
\simeq \widetilde{(\widehat R_x/\fp)}
\simeq \varprojlim_{n} \widehat R_x/\widehat\fm_x^n+\fp
\simeq \varprojlim_{n} \widehat R_x/(\fm_R\widehat R_x+\fp)^n+\fp.$$
Hence
$$\widetilde R_x/\fp \widetilde R_x
\simeq \varprojlim_{n} \widehat R_x/\fm_R^n\widehat R_x+\fp
\simeq \widehat R_x/\fp,$$
as $\widehat R_x/\fp$ is complete with respect to the $\fm_R\widehat R_x$-adic topology. 
So $\fp\widetilde R_x$ is a prime ideal. The second case is if $\fp\supseteq \fm_R$. Let $X_i$ be the irreducible component of the closed fiber $X$ associated to $\fp\cap R_x$. We have the embedding
$$\cO_{X_i, x}=R_x/\fp\cap R_x\simeq \widehat R_x/\fp \hookrightarrow \widetilde R_x/\fp\widetilde R_x \simeq \widetilde{\cO_{X_i, x}}.$$
The completion $\widetilde{\cO_{X_i, x}}$ being a domain implies that $\fp \widetilde R_x$ is a prime ideal.

We now prove that all the fibers of the extension $\widehat R_x\subseteq \widetilde R_x$ have dimension zero. Let $P$ be a prime ideal of $\widetilde R_x$. We need to prove that if $P$ is neither minimal nor maximal, then so is the prime ideal $\fp:=P\cap \widehat R_x$  of the ring $\widehat R_x$. If $P\supseteq \fm_R$ then $\fp\supseteq \fm_R$ which is obviously not a minimal prime ideal. Assume that $P\not\supseteq \fm_R$. So $P\not\supseteq \widehat \fm_x$. Since the ring $\widetilde R_x/P$ is of Krull dimension one, $\sqrt{\widehat \fm_x\widetilde R_x+P}=\widetilde \fm_x$. A similar argument as in the proof of Proposition \ref{32} concludes that $\widetilde R_x/P$ is finitely generated over $\widehat R_x/\fp$. Hence $\dim \widetilde R_x/P=\dim \widehat R_x/\fp$, and thus $\fp$ is not a minimal prime ideal of the ring $\widehat R_x$.

Combining all the fact above, we get that any prime ideal $P$ of $\widetilde R_x$ is generated by $P\cap \widehat R_x$, following Remark \ref{24}. Moreover, if $P=\fm_x\widetilde R_x$ then clearly $k_{\widetilde R_x}(P)=k_{\widehat R_x}(\widehat \fm_x)=k_{R_x}(\fm_x)$. If $P$ is not maximal then as $\cX$ is normal and excellent, $\widetilde R_x$ is normal. In particular, $(\widetilde R_x)_P$ is a discrete valuation ring. The extension $\widehat R_x\subseteq \widetilde R_x$ is therefore Weierstrass by Theorem \ref{23}.
\end{proof}

We now consider the case of general subset $U$ which might contain points on different irreducible components of the closed fiber of the curve $\cX$. In this case, $R_U$ is a domain but the $\fm_RR_U$-adic completion $\widehat R_U$ could be no more a domain. However, $\widehat R_U$ is still reduced and equidimensional as $R_U$ is an excellent domain. Let $P_1, \ldots, P_t$ be the minimal prime ideals of $\widehat R_U$. Taking the composition of two canonical maps $R_U\subset \widehat R_U$ and $\widehat R_U \rightarrow \widehat R_U/P_i$, we get a canonical inclusion $R_U\hookrightarrow \widehat R_U/P_i$ which is not flat in general. However, we still have the following theorem.

\begin{theorem}\label{36}
Keep the notations in \ref{31}. Let $U$ be a subset of the closed fiber of the normal curve $\cX$. Let $\widehat R_U$ be the $\fm_RR_U$-adic completion of $R_U$ and $P_1, \ldots, P_t\in \Spec \widehat R_U$ be generic points of the irreducible components of $\Spec \widehat R_U$. Let $Q$ be a maximal ideal of $\widehat R_U$. For each $P_i\subseteq Q$, the canonical inclusion $R_U\hookrightarrow (\widehat R_U/P_i)_Q$ is Weierstrass.
\end{theorem}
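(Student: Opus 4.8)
The plan is to localise first and only then deal with the quotient: although $R_U\hookrightarrow \widehat R_U/P_i$ is not flat, after localising at the maximal ideal $Q$ the ring $(\widehat R_U)_Q$ turns out to be a \emph{domain}, so the passage to $\widehat R_U/P_i$ becomes invisible and the map in question is the flat composite $R_U\to\widehat R_U\to (\widehat R_U)_Q$, to which Theorem \ref{23} applies exactly as in Theorem \ref{33}.

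First I would identify $Q$ geometrically. Since $\dim\widehat R_U>0$, a maximal ideal is never a generic point of a component, so Proposition \ref{32} applies to $Q$: writing $\fq=Q\cap R_U$ we get that $\fq$ is a maximal ideal of $R_U$ (a closed point $x$ of $X$), that $Q=\fq\widehat R_U$, and that $k_{R_U}(\fq)\simeq k_{\widehat R_U}(Q)$. I set $R_x=(R_U)_\fq=\cO_{\cX,x}$, a two-dimensional normal excellent local domain.

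The heart of the argument is to prove that $(\widehat R_U)_Q$ is a domain. Using $\widehat R_U/\fm_R^n\widehat R_U\simeq R_U/\fm_R^n R_U$ together with $\fm_R^n R_U\subseteq\fq$, localising at $Q$ gives $(\widehat R_U)_Q/\fm_R^n(\widehat R_U)_Q\simeq R_x/\fm_R^n R_x$ compatibly in $n$, so that the $\fm_R$-adic completion of $(\widehat R_U)_Q$ is $\widehat R_x$. As in the proof of Theorem \ref{33}(a), $\widehat R_x$ embeds into the full $\fm_x$-adic completion $\widetilde R_x$, which is a normal local domain by the excellence and normality of $\cX$; hence $\widehat R_x$ is a domain, and since a Noetherian local ring injects into its $\fm_R$-adic completion (as $\fm_R\subseteq Q$), so does $(\widehat R_U)_Q$, which is therefore a domain. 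Its only minimal prime is $(0)$, so among $P_1,\dots,P_t$ exactly one, the given $P_i$, lies in $Q$, and $P_i(\widehat R_U)_Q=0$; consequently $(\widehat R_U/P_i)_Q\simeq(\widehat R_U)_Q$.

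It then remains to check the hypotheses of Theorem \ref{23} for the flat local homomorphism $R_U\to(\widehat R_U)_Q$. Every prime of $(\widehat R_U)_Q$ is the extension of a prime $P\subseteq Q$ of $\widehat R_U$. If $P=P_i$ it extends to $(0)$, whose localisation is a field and which satisfies condition (a) trivially; if $P$ is non-minimal, Proposition \ref{32} gives $P=(P\cap R_U)\widehat R_U$ and $k_{R_U}(P\cap R_U)\simeq k_{\widehat R_U}(P)$, so conditions (a) and (b) persist after extension to $(\widehat R_U)_Q$. Thus Theorem \ref{23} (equivalently Remark \ref{27}) shows that $R_U\to(\widehat R_U)_Q$, and hence $R_U\hookrightarrow(\widehat R_U/P_i)_Q$, is Weierstrass. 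The one genuinely delicate step is the identification of the completion of $(\widehat R_U)_Q$ with $\widehat R_x$ and the resulting domain property: this is precisely the place where the global branching of $\widehat R_U$ is shown to disappear at a single closed point, so that localising at $Q$ selects exactly one component.
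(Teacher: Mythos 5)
Your reduction steps are sound as far as they go: the isomorphisms $(\widehat R_U)_Q/\fm_R^n(\widehat R_U)_Q\simeq (R_U)_\fq/\fm_R^n(R_U)_\fq$, the resulting embedding of $(\widehat R_U)_Q$ into the $\fm_R$-adic completion of $(R_U)_\fq$ (Krull intersection), and the final verification of the hypotheses of Theorem \ref{23} via Proposition \ref{32} and Remark \ref{27} are all correct, granted what precedes them. But the entire proof hinges on the assertion that $(R_U)_\fq$ is a local ring of $\cX$, i.e. ``$R_x=(R_U)_\fq=\cO_{\cX,x}$, a two-dimensional normal excellent local domain,'' and this is asserted, not proved. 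Nothing in the paper gives it, and it is not formal: $R_U$ is by definition the intersection $\bigcap_{u\in U}\cO_{\cX,u}$, which in general is \emph{not} a localization of the coordinate ring $A$ of an affine chart containing $U$. For instance, take $\cX=E\times_k\Spec R$ a constant elliptic curve over $R=k[[t]]$ and $U$ the closed fiber minus the origin $O$ and a non-torsion point $P$: a function $h\in L([O]+[P])\subseteq k(E)$ lies in $R_U$ but not in the localization of $A$ at the functions nonvanishing on $U$ (no element of that multiplicative set can cancel the pole of $h$ along the section through $P$, since a function on $E$ vanishing only at a non-torsion point does not exist). One can prove $(R_U)_{\fq_u}=\cO_{\cX,u}$ when $\fq_u$ is the trace of a point $u\in U$, but your argument needs that \emph{every} maximal ideal of $R_U$ arises this way, which requires an analysis of $\mathrm{Max}(R_U)$ through $R_U/\fm_RR_U$ that neither you nor the paper carries out. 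Without it you have neither normality/excellence of $(R_U)_\fq$ nor, consequently, the domain property of its completion, which is the whole content of your argument (absent excellence, a normal Noetherian local domain can very well have non-reduced completion).

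The gap is not incidental: your intermediate claim --- that $(\widehat R_U)_Q$ is a domain, so that exactly one $P_i$ lies under each maximal ideal $Q$ and the passage to $\widehat R_U/P_i$ is invisible --- is strictly stronger than Theorem \ref{36} itself, as it would show that $R_U\hookrightarrow(\widehat R_U)_Q$ is Weierstrass with no mention of the $P_i$ at all. The paper's proof is engineered precisely to avoid deciding this: it checks Lemma \ref{21}(c) directly for $\varphi\colon R_U\to(\widehat R_U/P_i)_Q$, pulling a principal ideal $J=(\bar b)$ back to $I=\varphi^{-1}(J)\subseteq R_U$, proving $I\neq 0$ (because by Proposition \ref{32} the $P_j$ exhaust the generic fiber of $R_U\to\widehat R_U$), and then using that $R_U/I\to(\widehat R_U/I\widehat R_U)_Q$ is already Weierstrass (Proposition \ref{32} plus Remark \ref{27}, applicable since $I\neq 0$ kills the minimal primes) composed with the surjection onto $(\widehat R_U/(I\widehat R_U+P_i))_Q$; this yields $J=I(\widehat R_U/P_i)_Q$ without any statement about how many minimal primes sit inside $Q$. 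So your route is genuinely different and, if the missing structural input on $\mathrm{Max}(R_U)$ and the local rings $(R_U)_\fq$ were supplied, it would prove a stronger result; as written, however, the proof is incomplete at its crucial step.
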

\begin{proof}

In order to show the Weierstrass property of the canonical homomorphism $\varphi: R_U \rightarrow (\widehat R_U/P_i)_Q$, by using Lemma \ref{21}(c), we will show that any principal ideal $J$ of $(\widehat R_U/P_i)_Q$ is generated by a set of generators in $R_U$ via $\varphi$. Suppose $J\not=0$ and $J\not=(\widehat R_U/P_i)_Q$. Let $J=\big(\frac{\bar b}{1}\big)$ for some $b\in Q$ and $I:=\varphi^{-1}(J)$.
Clearly, $Q\cap R_U\supseteq I\supseteq (b\widehat R_U+P_i)\cap R_U$. It is worth noting that $b\widehat R_U+P_i\not\subseteq \cup_{j=1}^t P_j$. So $I\not=0$ since  $P_1, \ldots, P_t$ are the only prime ideals in the generic fiber of of the completion map $R_U\rightarrow \widehat R_U$ (cf. Proposition \ref{32}).

The homomorphism $\varphi$ induces a homomorphism $\bar\varphi: R_U/I\rightarrow (\widehat R_U/I\widehat R_U+P_i)_Q$ which factors as
$$R_U/I\rightarrow (\widehat R_U/I\widehat R_U)_Q\rightarrow (\widehat R_U/I\widehat R_U+P_i)_Q.$$
Note that $(I\widehat R_U+P_i/P_i)_Q\subseteq J$ and we will denote the quotient by $\bar J$. Then $\bar J=\big(\frac{\bar b}{1})$ and $\bar \varphi^{-1}(\bar J)=0$. 

On the other hand, since $I\not=0$, the extension 
$$R_U/I \rightarrow \widehat{(R_U/I)}_Q\simeq (\widehat R_U/I\widehat R_U)_Q,$$
is Weierstrass following Proposition \ref{32} and Remark \ref{27}. As a consequence, the induced homomorphism
$$R_U/I \rightarrow (\widehat R_U/I\widehat R_U+P_i)_Q,$$
is Weierstrass. Hence $\bar J=0$ as it is generated by $\bar \varphi(\bar\varphi^{-1}(\bar J))=0$. It implies that in the ring $(\widehat R_U/P_i)_Q$, $J=I(\widehat R_U/P_i)_Q$ as required.
\end{proof}

Theorem \ref{36} provides a Weierstrass extension $R_U\rightarrow (\widehat R_U/P_i)_Q$ where $Q, P_i$ are as in the theorem. The ring $(\widehat R_U/P_i)_Q$ is henselian with respect to the ideal $(\fm_R)$. Recall that $F$ is the function field of the normal curve $\cX$. We denote in addition by $\widehat F_{U, P_i}$ the field of fractions of $\widehat R_U/P_i$. If the residue field $k$ of $R$ has characteristic different from $2$ then Theorem \ref{15} applies and we have $u(\widehat F_{U, P_i})\leq u(F)$. We even have more. Following Harbater-Hartmann-Krashen, the strong u-invariant $u_s(k)$ of a field $k$ is defined as the smallest integer $n$ such that $u(k^\prime)\leq n$ for any finite field extension $k^\prime/k$ and $u(L)\leq 2n$ for any finitely generated field extension of transcendence degree one $L/k$.

\begin{corollary}\label{37}
Keep the notations in \ref{31} and the hypothesis of Theorem \ref{36}. Suppose the residue field $k$ of $R$ has characteristic different from $2$. Then 
$$u(\widehat F_{U, P_i})\leq 4u_s(k).$$
\end{corollary}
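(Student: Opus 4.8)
The plan is to reduce the statement to the bound $u(\widehat F_{U, P_i}) \le u(F)$ established in the discussion preceding the corollary, and then to estimate $u(F)$ by $4u_s(k)$ through two applications of the definition of the strong $u$-invariant together with a reduction to the complete discretely valued case. First, recall that $F = K_{R_U}$ is the function field of $\cX$ and that Theorem \ref{36} produces a Weierstrass extension $R_U \hookrightarrow (\widehat R_U/P_i)_Q$; since this extension is henselian relative to $(\fm_R)$ and $2$ is invertible in its residue ring (as $\cha k \ne 2$), Theorem \ref{15} yields
$$u(\widehat F_{U, P_i}) = u\bigl(K_{(\widehat R_U/P_i)_Q}\bigr) \le u(K_{R_U}) = u(F).$$
It therefore suffices to prove $u(F) \le 4u_s(k)$.

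Let $K = K_R$ be the fraction field of $R$. The generic fibre of $\cX \to \Spec(R)$ is a curve over $K$, so $F$ is a finitely generated extension of $K$ of transcendence degree one. By the definition of the strong $u$-invariant this gives $u(F) \le 2u_s(K)$, and the problem is reduced to the inequality $u_s(K) \le 2u_s(k)$.

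To prove $u_s(K) \le 2u_s(k)$, I would pass to the normalization $\widetilde R$ of $R$ in $K$. Since $R$ is a complete local domain of dimension one it is excellent and Henselian, so $\widetilde R$ is a finite $R$-module and hence a complete semilocal normal domain of dimension one; being a domain finite over the Henselian local ring $R$, it is local, so $\widetilde R$ is a complete discrete valuation ring with $K_{\widetilde R} = K$ and residue field $k'$ a finite extension of $k$. Thus $K$ is a complete discretely valued field with residue field $k'$, and the theorem of Harbater-Hartmann-Krashen on strong $u$-invariants of complete discretely valued fields (see \cite{hhk1}) gives $u_s(K) \le 2u_s(k')$. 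Finally $u_s(k') \le u_s(k)$, because every finite extension of $k'$ is finite over $k$ and, transcendence degree being additive with $k'/k$ finite, every transcendence-degree-one extension of $k'$ is transcendence-degree-one over $k$; hence both conditions defining $u_s(k)$ furnish the bounds required for $k'$. Chaining these estimates gives
$$u(\widehat F_{U, P_i}) \le u(F) \le 2u_s(K) \le 4u_s(k') \le 4u_s(k).$$

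The step I expect to be the main obstacle is the reduction $u_s(K) \le 2u_s(k)$: because $R$ need not be a discrete valuation ring, one must first replace it by its normalization $\widetilde R$ and verify that $\widetilde R$ is a complete discrete valuation ring whose residue field $k'$ is a finite extension of $k$, so that the complete discretely valued field theorem of Harbater-Hartmann-Krashen becomes applicable and the passage from $k'$ back to $k$ costs nothing. The two invocations of the definition of $u_s$ and the application of Theorem \ref{15} are then purely formal.
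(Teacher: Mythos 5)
Your proposal is correct and follows essentially the same route as the paper's own proof: bound $u(\widehat F_{U,P_i})$ by $u(F)$ via Theorem \ref{15}, bound $u(F)$ by $2u_s(K)$ from the definition of $u_s$, pass to the integral closure $R'$ of $R$ in $K$ (a complete discrete valuation ring, since $R$ is complete, local, and henselian), apply the Harbater--Hartmann--Krashen theorem to get $u_s(K)\leq 2u_s(k')$, and finish with $u_s(k')\leq u_s(k)$ for the finite extension $k'/k$. The only cosmetic difference is that you invoke excellence to justify finiteness of the normalization, where the paper cites henselianity and finiteness directly; both justifications are valid.
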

\begin{proof} 
We have seen $u(\widehat F_{U, P_i})\leq u(F)$. So clearly $u(\widehat F_{U, P_i})\leq 2u_s(K)$, where $K$ is the field of fractions of $R$. Let $R^\prime$ be the integral closure of $R$ in $K$. Since $R$ is henselian, $R^\prime$ is local and finite over $R$. So $R^\prime $  is a complete discrete valuation ring. Due to Harbater-Hartmann-Krashen \cite[Theorem 4.10]{hhk1}, $u_s(K)=2u_s(k^\prime)$ where $k^\prime$ is the residue field of $R^\prime$. Note that $k^\prime$ is a finite extension of $k$. So $u_s(k^\prime)\leq u_s(k)$ by the definition. Therefore, $u(\widehat F_{U, P_i})\leq 4u_s(k)$.
\end{proof}


\section{Henselization and completion}
\label{sect4}

In this section we will study the Weierstrass property of the Henselization and the completion of a local ring. In applications of the Weierstrass preparation theorem, we are interested in those Weierstrass extensions $A\subseteq B$ such that $B$ is henselian (see Theorem \ref{15}). While the Weierstrass property indicates that the two rings $A, B$ are very close, such an extension probably does not exist because of the henselian requirement on the ring $B$. In seeking such extensions, a natural idea is to look at the Henselization and the completion  of the ring. Later we will see that, in many cases the Henselization and the completion are in fact two bounds, lower and upper. The next proposition illustrates part of this point.

\begin{proposition}\label{41}
Let $(A, \fm_A)\subseteq (B, \fm_B)$ be a faithfully flat ring extension consisting of Noetherian local rings. Assume that the extension is Weierstrass and the ring $B$ is henselian. Denote the Henselization of $A$ by $A^h$. Then we have faithfully flat embeddings $A\hookrightarrow A^h \hookrightarrow B$ whose composition is the inclusion $A\subseteq B$. If $B$ is not a principal ideal ring then we have the faithfully flat embeddings $A^h \hookrightarrow B\hookrightarrow \hat A$, where $\hat A$ is the $\fm_A$-adic completion of $A$. 

Moreover, the Henselization $A\hookrightarrow A^h$ is Weierstrass.
\end{proposition}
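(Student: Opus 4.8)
The plan is to build everything on the universal property of the Henselization together with the characterization in Theorem \ref{23}. Since $A\to B$ is a local homomorphism and $B$ is henselian, the universal property yields a unique local homomorphism $\psi\colon A^h\to B$ restricting to the inclusion $A\hookrightarrow B$; as $A\hookrightarrow A^h$ is always faithfully flat, this already produces the commutative triangle $A\hookrightarrow A^h\xrightarrow{\psi}B$ lying over $A\hookrightarrow B$. The crux of the first assertion is that $\psi$ is a faithfully flat embedding. For this I would use the description of $A^h$ as a filtered colimit $\varinjlim_i C_i$ of étale local $A$-algebras, each carrying a point over $\fm_A$ with trivial residue extension and a compatible map to $B$. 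For each $i$ the base change $B\otimes_A C_i$ is étale over $B$, and the chosen point of $C_i$ pulls back to a point over $\fm_B$ with residue field exactly $B/\fm_B$; since $B$ is henselian this point splits off a direct factor isomorphic to $B$, so $B\otimes_A C_i\cong B\times D_i$ with the projection onto $B$ the multiplication map $b\otimes c\mapsto b\,\psi(c)$. Passing to the colimit gives a ring isomorphism $B\otimes_A A^h\cong B\times D$. Now $B\otimes_A A^h$ is faithfully flat over $A^h$ (base change of the faithfully flat $A\to B$ along $A\to A^h$), and as an $A^h$-module it is $B\oplus D$ with the summand $B$ carrying precisely the structure induced by $\psi$; a direct summand of a flat module is flat, so $\psi$ is flat, hence (being local) faithfully flat and injective. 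This establishes $A\hookrightarrow A^h\hookrightarrow B$.

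For the second assertion, note that the Weierstrass property forces $\fm_AB=\fm_B$ (apply Lemma \ref{21} to $\fm_B$), and if in addition $B$ is not a principal ideal ring then $\fm_B$ is not principal, so the argument in the proof of Theorem \ref{23} showing $A/\fm_A^t\cong B/\fm_B^t$ for all $t$ (cf. Corollary \ref{25}) gives a compatible isomorphism of completions $\hat B\cong\hat A$ induced by $A\hookrightarrow B$. Composing the faithfully flat injective completion map $B\to\hat B$ with this isomorphism yields the faithfully flat embedding $B\hookrightarrow\hat A$, and by uniqueness in the universal property (applied to the henselian ring $\hat A$) its composite with $\psi$ is the canonical $A^h\hookrightarrow\hat A$; this gives the chain $A^h\hookrightarrow B\hookrightarrow\hat A$.

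Finally, for the Weierstrass property of $A\to A^h$ I would invoke Theorem \ref{23}. First, Example \ref{12}(d) applied to $A\to A^h\xrightarrow{\psi}B$, whose composite $A\to B$ is Weierstrass, shows that $\psi$ is itself Weierstrass. Since $A^h$ is Noetherian local and $A\to A^h$ is flat, it remains to verify conditions (a) and (b) of Theorem \ref{23} for $A\to A^h$. Given $Q\in\Spec(A^h)$, faithful flatness of $\psi$ provides $P\in\Spec(B)$ with $P\cap A^h=Q$; putting $\fp=Q\cap A=P\cap A$, the Weierstrass property of $\psi$ gives $P=QB$ and that of $A\to B$ gives $P=\fp B$, whence $QB=\fp B=\fp A^h B$. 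Contracting along the faithfully flat $\psi$, so that $IB\cap A^h=I$, yields $Q=\fp A^h=(Q\cap A)A^h$, which is condition (a). For condition (b), take $P=QB$, so that $\fm_{(A^h)_Q}B_P=\fm_{B_P}$: if $B_P$ is not a principal ideal ring, then Theorem \ref{23}(b) for both $A\to B$ and $\psi$ forces $k_A(\fp)=k_B(P)=k_{A^h}(Q)$, and the tower $k_A(\fp)\subseteq k_{A^h}(Q)\subseteq k_B(P)$ gives $k_A(\fp)=k_{A^h}(Q)$; if $B_P$ is a principal ideal ring, then the flat base change $(A^h)_Q\to B_P$ and Lemma \ref{22} give $\dim_{k_{A^h}(Q)}\fm_{(A^h)_Q}/\fm_{(A^h)_Q}^2=\dim_{k_B(P)}\fm_{B_P}/\fm_{B_P}^2\le 1$, so $(A^h)_Q$ is a principal ideal ring. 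Theorem \ref{23} then concludes that $A\to A^h$ is Weierstrass. The main obstacle is the flatness of $\psi$, i.e. the henselian splitting $B\otimes_A A^h\cong B\times D$; once this is secured, the remaining steps are formal consequences of Theorem \ref{23}, Lemma \ref{22} and Example \ref{12}(d).
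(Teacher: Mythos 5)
Your proposal is correct, but it takes a genuinely different route from the paper's proof in two of the three assertions. For the embedding $A^h\hookrightarrow B$, the paper never touches the ind-\'etale presentation of $A^h$: it passes to $\fm_A$-adic completions, uses the isomorphism $\hat A\simeq \widehat{A^h}$ (EGA IV, Th\'eor\`eme 18.6.6) together with the injectivity of $\hat A\to\hat B$ (which comes from faithful flatness of $A\subseteq B$ on the quotients $A/\fm_A^n\to B/\fm_A^nB$) and the inclusion $A^h\subseteq \widehat{A^h}$, to conclude $\Ker(\psi)=0$. Note that this argument of the paper only establishes \emph{injectivity} of $\psi$, whereas the statement asserts a faithfully flat embedding; your henselian-splitting argument supplies exactly the flatness of $\psi$, so on this point your proof is more complete than the paper's. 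For the Weierstrass property of $A\to A^h$, the paper argues by reducing modulo each prime $\fp$: it applies the first part to the quotient extension $A/\fp\subseteq B/\fp B$ (still faithfully flat, Weierstrass, with henselian target) to embed $A^h/\fp A^h\simeq (A/\fp)^h$ into the domain $B/\fp B$, so $\fp A^h$ is prime; condition (a) of Theorem \ref{23} then follows from the EGA fact that the fibers of Henselization have dimension zero together with Remark \ref{24}, and condition (b) from the tower $k_A(\fp)\subseteq k_{A^h}(\fp A^h)\subseteq k_B(\fp B)$. You instead deduce that $\psi$ is Weierstrass from Example \ref{12}(d) and verify (a) by contracting ideals along the faithfully flat $\psi$ (using Lemma \ref{21} for both $A\to B$ and $\psi$), and (b) via Lemma \ref{22}; this avoids the external input on Henselization fibers (at the cost of leaning on the flatness of $\psi$ from your first step), and it treats explicitly the case where $B_P$ is a principal ideal ring, which the paper's proof leaves implicit. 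The middle assertion ($\hat A\simeq\hat B$ via Corollary \ref{25}, hence $B\hookrightarrow\hat A$) is handled the same way in both proofs.

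One detail in your first step deserves shoring up. When you "pass to the colimit" to get $B\otimes_AA^h\cong B\times D$, the idempotents $e_i$ splitting off the $B$-factor of $B\otimes_AC_i$ need not be carried to $e_j$ by the transition maps: writing $\bar e_i$ for the image of $e_i$ in $B\otimes_AC_j$, one only gets $e_j\bar e_i=e_j$, so the factors may properly shrink along the system and the colimit need not literally be a product. Your conclusion nevertheless survives with a small repair: the kernel of the multiplication map $\mu\colon B\otimes_AA^h\to B$ is the increasing union of the ideals $(1-\epsilon_i)$ generated by the (images of the) idempotents, hence $B\simeq(B\otimes_AA^h)/\Ker(\mu)$ is a filtered colimit of the direct summands $\epsilon_i(B\otimes_AA^h)$, and a filtered colimit of flat modules is flat; composing with the flatness of $A^h\to B\otimes_AA^h$ gives the flatness of $\psi$, which is all you use afterwards.
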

\begin{proof}
The universality of Henselization gives rise to the existence of a homomorphism $\psi: A^h\rightarrow B$ making the following triangle commutative

\centerline{
\xymatrix@1{
A\ \ \ar@{^{(}->}[rr]
    \ar@{_{(}->}[dr] && \ A^h\ar[dl]^\psi.\\
&B}
}

\noindent This induces a commutative diagram

\centerline{
\xymatrix@1{
A/\fm_A^n\ \ar@{^{(}->}[rr]
    \ar@{_{(}->}[dr]\  &&\ A^h/\fm_A^nA^h\ar[dl]^{\bar \psi}\ \\
&B/\fm_A^nB}
}

\noindent where $A^h/\fm_A^nA^h\simeq (A/\fm_A^n)^h$. Taking the limit we get a commutative diagram

\centerline{
\xymatrix@1{
\hat A\ \ar[rr]^\simeq
    \ar@{_{(}->}[dr]\  &&\ \widehat{A^h}\ar[dl]^{\hat \psi}\ \\
&\hat B}
}

\noindent Note that the map $\hat A \rightarrow \widehat{A^h}$ is in fact an isomorphism (cf. \cite[Th\'eor\`eme 18.6.6]{ega1967}). Hence $\hat\psi$ is injective. Identify $A^h$ with a subring of $\widehat{A^h}$, we get that $\Ker(\psi)\subseteq \Ker(\hat \psi)=0$. So $\psi$ is injective.

If $B$ is not a principal ideal ring, then $\hat A\simeq \hat B$ following Corollary \ref{25}. So there is a faithfully flat embedding $B\hookrightarrow \hat A$. 

In order to show that the extension $A\hookrightarrow A^h$ is Weierstrass, we note that the induced flat homomorphism $A/\fp \hookrightarrow B/\fp B$ is Weierstrass for any prime ideal $\fp$ of $A$. By the first part of the proof we obtain injective homomorphisms $A/\fp\hookrightarrow (A/\fp)^h\hookrightarrow B/\fp B$. Note also that $(A/\fp)^h\simeq A^h/\fp A^h$. So $A^h/\fp A^h$ is a domain, or equivalently, $\fp A^h$ is a prime ideal of $A^h$. Combining this with the fact that the Henselization $A \subseteq A^h$ has all fibers of dimension zero (cf. \cite[Th\'eor\`eme 18.6.9]{ega1967}), we conclude that $P=(P\cap A)A^h$ for any prime ideal $P$ of $A^h$ by using Remark \ref{24}. Moreover, from the argument above we have the inclusions
$$k_A(\fp)\subseteq k_{A^h}(\fp A^h)\subseteq k_B(\fp B).$$
If the ideal $\fp B$ is not principal, then $k_A(\fp)=k_{A^h}(\fp A^h)=k_B(\fp B)$ by Theorem \ref{23}. Therefore, the Henselization $A\subseteq A^h$ is Weierstrass by Theorem \ref{23} again.
\end{proof}

Taking the images, one can assume $A\subseteq A^h\subseteq B$. Suppose in addition that the characteristic of the residue fields is either zero or an odd prime, then by Theorem \ref{15} one get a comparison of u-invariants $u(K_A)\geq u(K_{A^h})\geq u(K_B)$.

We have seen in Proposition \ref{41} that the Henselization $A\subseteq A^h$ is Weierstrass provided the existence of a henselian Weierstrass extension. The conclusion in fact holds true with a much weaker hypothesis. Recall that a local ring $R$ is unibranch if the reduction $R_\mathrm{red}$ is a domain and the integral closure of $R$ in its field of fractions is a local ring.

\begin{proposition}\label{42}
Let $A$ be a Noetherian local domain. The Henselization $A\subseteq A^h$ is Weierstrass if and only if
\begin{enumerate}
\item[(a)] For any prime ideal $\fp\in \Spec(A)$, $A/\fp$ is unibranch;
\item[(b)] For any prime ideal $\fp\in \Spec(A)$, if $\fp\not=0$ and $A_\fp$ is not a discrete valuation ring then $k_A(\fp)\simeq k_{A^h}(\fp A^h)$.
\end{enumerate}
\end{proposition}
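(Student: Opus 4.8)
The plan is to recognise $A\subseteq A^h$ as a flat homomorphism into a \emph{local} ring and to run it through the characterization of Theorem \ref{23}, translating its two conditions into conditions (a) and (b) of the statement. To set this up I would first collect the standard properties of Henselization that are available: $A^h$ is Noetherian local with maximal ideal $\fm_AA^h$, the inclusion $A\hookrightarrow A^h$ is faithfully flat, all its fibers are zero-dimensional (\cite[Th\'eor\`eme 18.6.9]{ega1967}), and for every prime $\fp$ of $A$ one has $A^h/\fp A^h\simeq (A/\fp)^h$, with Henselization preserving reducedness. Since $A^h$ is local and $A\to A^h$ is flat, Theorem \ref{23} says precisely that the extension is Weierstrass if and only if every $P\in\Spec(A^h)$ satisfies $P=(P\cap A)A^h$ and, unless $(A^h)_P$ is a principal ideal ring, the inclusion $k_A(P\cap A)\hookrightarrow k_{A^h}(P)$ is an isomorphism. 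Thus the whole task is to match these with (a) and (b).

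For the condition on the ideals I would invoke Remark \ref{24}: because the fibers are already zero-dimensional, condition (a) of Theorem \ref{23} is equivalent to requiring that $\fp A^h$ be prime (it is never the unit ideal, by faithful flatness) for every $\fp\in\Spec(A)$. Now $\fp A^h$ is prime exactly when $A^h/\fp A^h\simeq (A/\fp)^h$ is a domain; and since $A/\fp$ is a reduced local ring, its Henselization is reduced, so $(A/\fp)^h$ is a domain precisely when $\Spec(A/\fp)^h$ is irreducible, i.e. precisely when $A/\fp$ is unibranch, the minimal primes of the Henselization being in bijection with the branches (see \cite{ega1967}). This is condition (a), and it is a genuine two-way equivalence, so it handles both directions of the proposition for the ideal-generation requirement.

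For the residue-field condition I would fix $\fp\neq 0$ and set $P=\fp A^h$ (legitimate once (a) holds), then compare $A_\fp$ with $(A^h)_P$. Localizing the flat map produces a faithfully flat local homomorphism $A_\fp\to (A^h)_P$ carrying $\fp A_\fp$ onto $P(A^h)_P$, whose fiber is the field $k_{A^h}(P)$: the fiber of $A\to A^h$ at $\fp$ is a field by Remark \ref{24}, and localizing recovers $\Frac(A^h/\fp A^h)$. Exactly as in the principal case treated inside the proof of Theorem \ref{23}, Lemma \ref{22} applied to $\fp A_\fp/\fp^2A_\fp$ forces the two embedding dimensions $\dim_{k_A(\fp)}\fp A_\fp/\fp^2 A_\fp$ and $\dim_{k_{A^h}(P)}P(A^h)_P/P^2(A^h)_P$ to coincide; since the Krull dimensions coincide too, $(A^h)_P$ is a principal ideal ring if and only if $A_\fp$ is, and for $\fp\neq 0$ this means if and only if $A_\fp$ is a discrete valuation ring. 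Feeding this into condition (b) of Theorem \ref{23} yields condition (b) of the proposition verbatim, the case $\fp=0$ being automatic since $(A^h)_0$ is a field. Assembling the two matches and quoting the necessary and sufficient directions of Theorem \ref{23} then gives both implications.

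The step I expect to be the real obstacle is the equivalence ``$A/\fp$ unibranch $\iff (A/\fp)^h$ is a domain''; everything else is a faithful bookkeeping translation of Theorem \ref{23}. I would want to pin down the precise reference relating branches to the minimal primes of the Henselization and to confirm, using that Henselization is a filtered colimit of \'etale algebras, that reducedness is preserved, so that ``irreducible spectrum'' upgrades to ``domain.'' A secondary point to verify carefully is that the localized fiber $(A^h)_P\otimes_{A_\fp}k_A(\fp)$ really is a single field rather than a product; this is what makes Lemma \ref{22} give the clean embedding-dimension comparison, and it is where zero-dimensionality of the fibers and primeness of $\fp A^h$ are both used.
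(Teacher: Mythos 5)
Your proposal is correct and takes essentially the same route as the paper: reduce to Theorem \ref{23} via Remark \ref{24} and the regularity (hence zero-dimensionality and reducedness) of the Henselization fibers, then identify primeness of $\fp A^h$ with $A/\fp$ being unibranch through the correspondence between branches and the minimal primes of $(A/\fp)^h$ (the paper cites Nagata's theorem \cite[Theorem 43.20]{nag} where you cite EGA, but the fact invoked is the same). Your explicit Lemma \ref{22} comparison of embedding dimensions, matching ``$(A^h)_P$ is a principal ideal ring'' with ``$A_\fp$ is a field or a discrete valuation ring,'' is correct and in fact spells out a translation that the paper's proof leaves implicit.
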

\begin{proof}
We know that the fibers of the Henselization $A\rightarrow A^h$ are regular of dimension zero (see \cite[Th\'eor\`eme 18.6.9]{ega1967}). So by Theorem \ref{23} and Remark \ref{24}, $A\rightarrow A^h$ is Weierstrass if and only if it satisfies the condition (b) in Theorem \ref{23} and the following condition $(a')$: for any prime ideal $\fp\in \Spec(A)$, $\fp A^h$ is a prime ideal of $A^h$. Let $(A/\fp)^\prime$ be the integral closure of $A/\fp$ in its field of fractions. A beautiful theorem of Nagata \cite[Theorem 43.20]{nag} says that there is a $1-1$ correspondence between the maximal ideals of $(A/\fp)^\prime$ and the associated prime ideals of $(A/\fp)^h$. The isomorphism $A^h/\fp A^h\simeq (A/\fp)^h$ for any prime ideal $\fp$ of $A$ implies that $A^h/\fp A^h$ is reduced (again, because the fibers of the Henselization are regular). So $A^h/\fp A^h$ is a domain if and only if $A/\fp$ is unibranch. This proves the equivalence of $(a)$ and $(a')$.
\end{proof}

As an application of Proposition \ref{42}, we get the following immediate consequence.
\begin{corollary}\label{43}
Let $A$ be a Noetherian local ring of dimension $2$ which is regular in codimension one (for example, if $A$ is normal). Suppose for any prime ideal $\fp\in \Spec(A)$, $A/\fp$ is unibranch. The Henselization $A\rightarrow A^h$ is Weierstrass.
\end{corollary}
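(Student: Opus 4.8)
The plan is to apply Proposition \ref{42}, reducing the assertion that $A\to A^h$ is Weierstrass to verifying the two conditions (a) and (b) of that proposition. Condition (a) is supplied directly in the hypotheses of the corollary: for every prime ideal $\fp\in\Spec(A)$, the quotient $A/\fp$ is unibranch. So the entire content of the proof is to establish condition (b), namely that whenever $\fp\neq 0$ and $A_\fp$ is not a discrete valuation ring, the residue field extension $k_A(\fp)\hookrightarrow k_{A^h}(\fp A^h)$ is an isomorphism.

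The key observation I would use is a dimension count based on the assumption $\dim A=2$. For a nonzero prime $\fp$, we have $\htt(\fp)\in\{1,2\}$. First I would dispose of the height-one case: if $\htt(\fp)=1$, then since $A$ is regular in codimension one, the localization $A_\fp$ is a regular local ring of dimension one, i.e.\ a discrete valuation ring. Hence such primes are exactly the ones excluded in the statement of condition (b), and we need not check anything for them. This leaves only the primes $\fp$ with $\htt(\fp)=2$, which by $\dim A=2$ forces $\fp=\fm_A$, the maximal ideal.

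Therefore condition (b) need only be checked at the single prime $\fp=\fm_A$. For the maximal ideal the residue field extension is $k_A(\fm_A)=A/\fm_A\hookrightarrow k_{A^h}(\fm_A A^h)=A^h/\fm_A A^h$. Since the Henselization of a local ring shares the same residue field (the Henselization $A\to A^h$ is a filtered colimit of \'etale neighborhoods inducing the trivial residue field extension, cf.\ \cite[Th\'eor\`eme 18.6.6]{ega1967}), this map is an isomorphism. Thus condition (b) holds trivially at $\fm_A$, and it holds vacuously at every other nonzero prime because those primes $\fp$ either are excluded (height one, a discrete valuation ring) or do not occur.

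Having verified both (a) and (b), I would invoke Proposition \ref{42} to conclude that the Henselization $A\to A^h$ is Weierstrass. I do not expect any genuine obstacle here: the corollary is essentially a specialization of Proposition \ref{42} in which the geometric hypotheses ($\dim A=2$ and regularity in codimension one) are precisely tailored so that condition (b) collapses to the trivial residue-field equality at the maximal ideal. The only point requiring slight care is the clean separation of primes by height, ensuring that the condition-(b) primes are either discrete valuation rings (hence exempt) or equal to $\fm_A$ (hence handled by the residue-field invariance of Henselization).
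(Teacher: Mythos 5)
Your proof is correct and is precisely the argument the paper leaves implicit when it calls Corollary \ref{43} an ``immediate consequence'' of Proposition \ref{42}: condition (a) is hypothesis, and condition (b) collapses because regularity in codimension one makes $A_\fp$ a discrete valuation ring at every height-one prime (hence exempt), while the only remaining nonzero prime is $\fm_A$, where the Henselization induces the identity on residue fields. No gap, and no difference in approach.
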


The Henselization serves as a lower bound for Weierstrass faithfully flat extensions of a local ring as we have seen in Proposition \ref{41}. We now consider an upper bound - the completion. Let $(A, \fm_A)\subseteq (B, \fm_B)$ be a Weierstrass faithfully flat extension. We have $A^h\hookrightarrow  B$ and $\hat A\simeq \widehat{A^h}\hookrightarrow \hat B$. If $B$ is a principal ideal ring then the inclusion $\hat A\rightarrow \hat B$ is not an isomorphism in general.

\begin{example}\label{44}
Let $A=\bR[[X]]$ and $B=\bC[[X]]$ be the rings of formal power series over the fields of real and complex numbers. The natural embedding $\bR[[X]]\rightarrow \bC[[X]]$ is faithfully flat and Weierstrass which is not an isomorphism.
\end{example}

If $B$ is not a principal ideal ring then the map $\hat A\hookrightarrow \hat B$ is an isomorphism. We obtain the faithfully flat embeddings
$$A^h\hookrightarrow B\hookrightarrow \hat A.$$
There comes a natural question whether the extension $A\subseteq \hat A$ is Weierstrass.

\begin{corollary}\label{45}
Let $A$ be a Noetherian local domain of dimension one. The completion $A\subseteq \hat A$ is Weierstrass if and only if $A$ is analytically unramified and unibranch.
\end{corollary}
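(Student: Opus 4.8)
The plan is to treat the two implications separately and to reduce the whole statement to Corollary~\ref{25}, together with the standard fact (already invoked in Theorem~\ref{35}) that for a Noetherian local domain the completion $\hat A$ is a domain if and only if $A$ is analytically unramified and unibranch. Throughout I would use that the completion map $A\to\hat A$ is an injective faithfully flat local homomorphism with $\fm_A\hat A=\fm_{\hat A}$ and $\dim\hat A=\dim A=1$, so that $\Spec(\hat A)$ consists only of the zero ideal and the maximal ideal $\fm_{\hat A}$.

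For the forward implication, suppose $A\subseteq\hat A$ is Weierstrass. First I would show that $\hat A$ is a domain, exactly as in the first line of the proof of Theorem~\ref{15}: if $b_1b_2=0$ with $b_1,b_2\neq 0$, write $b_i=a_ib_i'$ with $a_i\in A$ and $b_i'\in\hat A^\times$ by the Weierstrass property; then $a_1a_2b_1'b_2'=0$ forces $a_1a_2=0$ in $A$, and since $A$ is a domain one of the $a_i$, hence one of the $b_i$, vanishes, a contradiction. Once $\hat A$ is known to be a domain, the cited equivalence immediately gives that $A$ is analytically unramified and unibranch.

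For the reverse implication, assume $A$ is analytically unramified and unibranch, so that $\hat A$ is a domain; then $A\subseteq\hat A$ is a faithfully flat extension of Noetherian local domains and Corollary~\ref{25} applies. It remains only to verify its hypotheses on the two primes of $\hat A$. For $P=0$ one has $P=0\cdot\hat A$ and $\hat A_{0}=\Frac(\hat A)$ is a field. For $P=\fm_{\hat A}$ one has $P=\fm_A\hat A$ with $\fm_{\hat A}\cap A=\fm_A$, while $A_{\fm_A}=A$ and $(\hat A)_{\fm_{\hat A}}=\hat A$ are analytically isomorphic, since both are already complete with common completion $\hat A=\widehat{\hat A}$. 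Hence every prime of $\hat A$ satisfies the conditions of Corollary~\ref{25}, and $A\subseteq\hat A$ is Weierstrass.

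The argument is short because all the substance is carried by Corollary~\ref{25}; rather than a genuine obstacle, the points that need attention are the passage from the Weierstrass property to $\hat A$ being a domain in the forward direction, and, in the converse, the observation that the analytic-isomorphism alternative of Corollary~\ref{25} holds trivially at $\fm_{\hat A}$ because the source of the extension coincides with its own completion. The one external input to keep in mind is the equivalence \emph{$\hat A$ is a domain} $\Longleftrightarrow$ \emph{$A$ is analytically unramified and unibranch}, which is the standard criterion already used in Theorem~\ref{35}.
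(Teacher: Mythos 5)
Your proof is correct and follows essentially the same route as the paper: both reduce the statement to the equivalence ``$A\subseteq \hat A$ is Weierstrass $\Leftrightarrow$ $\hat A$ is a domain'' (the paper cites Theorem \ref{23} directly, while you use the elementary domain argument from Theorem \ref{15} for the forward direction and the specialization Corollary \ref{25} for the converse), and then both invoke the standard fact that $\hat A$ is a domain precisely when $A$ is analytically unramified and unibranch. The differences are cosmetic, not a different method.
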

\begin{proof}
Since $A$ is a local domain of dimension one, Theorem \ref{23} shows that the extension $A\subseteq \hat A$ is Weierstrass if and only if $\hat A$ is a domain. This is equivalent exactly to the sufficient condition in the corollary.
\end{proof}
In dimension two, Theorem \ref{35} brings to us interesting examples from functions on curves over a complete local domain of dimension one. For higher dimension, examples of local rings with Weierstrass completion are in fact quite rare.

\begin{example}\label{46}
Let $X$ be a projective variety over a field $k$ and $x$ be a point of $X$. The dimension of the generic fiber of the completion map $\cO_{X, x}\rightarrow \widehat \cO_{X, x}$ is $\codim_x(X)-1$ (see the proof of Proposition \ref{47} below). So if the completion map is Weierstrass, we get $\codim_x(X)=1$ following Remark \ref{24}. If $x$ is a closed point, this simply means $X$ being a curve.
\end{example}

Though in this example the flat extension $\cO_{X, x}\subseteq \widetilde \cO_{X, x}$ is often not Weierstrass, there are Weierstrass non-flat extensions induced from it. The example suggests that the following proposition could apply to many local rings coming from geometry.

\begin{proposition}\label{47}
Let $(A, \fm_A)$ be a Noetherian local ring. Let $P$ be a prime ideal of the $\fm_A$-adic completion $\hat A$. We denote by $\bar A$ the image of $A$ through the composition of maps $A\hookrightarrow \hat A\rightarrow \hat A/P$. The ideal  $P$ is maximal in the generic fiber of the completion map $A\hookrightarrow \hat A$ if and only if for each $\bar a\in \hat A/P$, there is an element $0\not=\bar b \in \hat A/P$ such that $\bar a\bar b \in \bar A$.

If in addition $\dim \hat A/P=1$, then $\bar b$ could be chosen to be invertible. Equivalently, the induced homomorphism $A\rightarrow \hat A/P$ is Weierstrass.
\end{proposition}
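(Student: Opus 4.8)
The plan is to translate both assertions into statements about the localized fiber ring and then, in the one-dimensional case, to manufacture an explicit Weierstrass factorization out of the completeness of $\hat A$. Throughout set $\fp=P\cap A$ (a minimal prime of $A$ when $P$ lies in the generic fiber), write $\bar A=A/\fp$ for the image of $A$ in the domain $D:=\hat A/P$, and put $S=\bar A\setminus\{0\}$.

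First I would prove the equivalence. The fiber of $A\hookrightarrow\hat A$ over $\fp$ has coordinate ring $\hat A\otimes_A k_A(\fp)$, and the quotient of this ring corresponding to $P$ is $D\otimes_A k_A(\fp)\cong S^{-1}D$; hence $P$ is maximal in the fiber (i.e. in the generic fiber, when $\fp$ is minimal) exactly when $S^{-1}D$ is a field. Since $D$ is a domain, a nonzero $\bar a\in D$ is invertible in $S^{-1}D$ if and only if $\bar a\bar b\in S$ for some $\bar b\in D$, that is, if and only if there is a nonzero $\bar b\in D$ with $\bar a\bar b\in\bar A\setminus\{0\}$. Therefore $S^{-1}D$ is a field precisely when every $\bar a\in D$ admits such a $\bar b$ (the case $\bar a=0$ being trivial), which is exactly the stated condition. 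The only real content here is the identification of the fiber with $S^{-1}D$.

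For the second assertion I first record the structure of $D$. As a quotient of the complete local ring $\hat A$ by the prime $P$, the ring $D$ is a complete Noetherian local domain, and $\dim D=1$ by hypothesis (so in fact $P$ is automatically maximal in the generic fiber, and the dimension hypothesis alone will already deliver the invertible $\bar b$). Two facts come for free from completion: $\fm_D=\fm_A D$ and $D/\fm_D\cong A/\fm_A$, so $\bar A$ surjects onto the residue field of $D$; an elementary induction from $D=\bar A+\fm_D$ and $\fm_D=\fm_A D$ then gives $D=\bar A+\fm_D^{\,n}$ for every $n$. Because a complete local domain is Nagata, the integral closure $\widetilde D$ of $D$ in $L:=\Frac D$ is a module-finite extension; being a module-finite domain over a complete local ring it is itself local, hence a complete discrete valuation ring. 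Let $v$ be its valuation and $\mathfrak c=(D:_D\widetilde D)$ its conductor, so that $\{x\in\widetilde D:v(x)\geq N\}=\mathfrak c\subseteq D$ for some $N$; note also $\fm_{\widetilde D}\cap D=\fm_D$, whence $D^{\times}=\{x\in D:v(x)=0\}$.

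The key step produces the factorization. Given $0\neq\bar a\in D$ with $v(\bar a)=m$, I would use $D=\bar A+\fm_D^{\,m+N}$ to choose $a_0\in\bar A$ with $\bar a-a_0\in\fm_D^{\,m+N}$; since $v\geq 1$ on $\fm_D$ this forces $v(\bar a-a_0)\geq m+N>m$ and hence $v(a_0)=m$. Then $w:=\bar a/a_0-1=(\bar a-a_0)/a_0$ lies in $\widetilde D$ with $v(w)\geq N$, so $w\in\mathfrak c\subseteq D$ and $\bar a/a_0=1+w$ has $v=0$, i.e. is a unit of $D$. Thus $\bar a=a_0(1+w)$ with $a_0\in\bar A$ and $1+w\in D^{\times}$, which is the desired Weierstrass expression (matching the criterion of Lemma \ref{21}(c)); equivalently one takes $\bar b=(1+w)^{-1}$, an invertible element with $\bar a\bar b=a_0\in\bar A$, so $A\to\hat A/P$ is Weierstrass. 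I expect the main obstacle to be exactly this last transfer: the quotient $\bar a/a_0$ is a priori only a unit of the normalization $\widetilde D$, not of $D$, so it must be arranged to differ from $1$ by a conductor element. This is what dictates the approximation accuracy $m+N$ and is the one place where both the completeness of $\hat A$ (yielding $D=\bar A+\fm_D^{\,n}$) and the one-dimensionality (finiteness of the conductor) are genuinely used.
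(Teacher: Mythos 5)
Your proof of the equivalence is essentially the paper's own argument: identifying the fiber over $\fp=P\cap A$ with $S^{-1}D$ and reading off the field condition is the same localization computation the paper does with $S=A\setminus\{0\}$ after replacing $A$ by $A/(P\cap A)$. Where you genuinely diverge is the one-dimensional case, and your route is correct but different. The paper stays elementary and ideal-theoretic: for $a\in\hat\fm_A\setminus P$ it forms the $\fm_A$-primary ideal $I=(a\hat A+P)\cap A$, uses the isomorphism $A/I\simeq \hat A/I\hat A$ (completeness entering only through Artinian quotients) to conclude $I\hat A=a\hat A+P$, and then a unit-detection trick --- writing $a=\sum_i\alpha_ix_i$ with the $x_i$ generating $I$, and $\bar x_i=\beta_i\bar a$ in $\hat A/P$, so that $1-\sum_i\alpha_i\beta_i\in P$ forces some $\beta_i$ to be a unit --- yields $\bar a=\beta_i^{-1}\bar x_i$ with $x_i\in A$. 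You instead invoke the structure theory of one-dimensional complete local domains: $D=\hat A/P$ is Nagata, its normalization $\widetilde D$ is a module-finite DVR, and approximating $\bar a$ by some $a_0\in\bar A$ to order beyond the conductor exponent makes $\bar a/a_0$ a unit of $D$. The completeness inputs are parallel (the paper's $A/I\simeq\hat A/I\hat A$ plays the role of your $D=\bar A+\fm_D^{\,n}$), but the trade-off differs: the paper's argument needs nothing beyond Noetherianity and the definition of completion, whereas yours imports excellence of complete local rings and finiteness of integral closure, in exchange for a more quantitative statement (elements of $\bar A$ approximate $D$ up to conductor level) and an explicit valuation-theoretic reason why a unit appears. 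One small repair is needed in your key step: the inequality $v(\bar a-a_0)\geq m+N>m$ fails when $N=0$, i.e.\ when $D$ is already normal and $\fc=\widetilde D$ (there $a_0=0$ is a legitimate choice and the division breaks down); since $\{x\in\widetilde D: v(x)\geq N'\}\subseteq D$ for every $N'\geq N$, simply replace $N$ by $\max(N,1)$ and your argument goes through verbatim.
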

\begin{proof}

Replacing $A$ by $A/P\cap A$, we can assume that $P\cap A=0$. 

Suppose $P$ is maximal in the generic formal fiber of $A$. Take $\bar a\in \hat A/P$, $\bar a\not=0$. If we denote $S=A\setminus\{0\}$ then clearly $S^{-1}A\rightarrow S^{-1}(\hat A/P)$ is a field extension (usually transcendental). In particular, there is an element $\frac{\bar{a^\prime}}{s}\in S^{-1}(\hat A/P)$ where $\bar{a^\prime}\in \hat A/P$, $s\in S$, such that $\frac{\bar a.\bar{a^\prime}}{s}=1$. The conclusion then follows.

Conversely, it is obvious that $S^{-1}(\hat A/P)$ is a field. This shows the maximality of $P$ in the generic formal fiber of $A$.

Now we assume $\dim \hat A/P=1$. Take an element $a\in \widehat \fm_A$ such that $a\not\in P$. Then $\sqrt{a\hat A+P}=\widehat\fm_A$. Let $I=(a\hat A+P)\cap A$. The ideal $I$ is $\fm_A$-primary, thus the canonical homomorphism $A/I\rightarrow \widehat{A/I}\simeq \hat A/I\hat A$ is an isomorphism. This particularly implies $(a\hat A+P)/I\hat A=0$ as its restriction to $A/I$ is zero. Hence $I\hat A=a\hat A+P$. Let $x_1, \ldots, x_n$ be a set of generators of the ideal $I$. We write $a=\sum_{i=1}^n\alpha_ix_i$ for some $\alpha_i\in \hat A$. In the ring $\hat A/P$ we also have $\overline{x_i}=\beta_i\bar a$ for some $\beta_i\in \hat A$. Then 
$$\bar a(1-\sum_{i=1}^n\alpha_i\beta_i)=0.$$
As $\bar A/P$ is a domain and $\bar a\not=0$, the above equality shows that $1-\sum_{i=1}^n\alpha_i\beta_i\in P$. In particular, there is an invertible element $\beta_i$ in $\hat A$. So $\bar a=\beta_i^{-1}.\overline{x_i}$, where $x_i\in I\subset A$.
\end{proof}

\medskip
\noindent{\bf Acknowledgments.} The author is grateful to Julia Hartmann for helpful comments on the first draft of the manuscript and to Kay R\"ulling for interesting discussion on the ring of functions on curves in Section \ref{sect3}. We would like to thank the referees for their detailed suggestions for improvement to this paper. This work is finished during the author's postdoctoral fellowship at the Vietnam Institute for Advanced Study in Mathematics (VIASM). He thanks VIASM for financial support and hospitality.



\begin{thebibliography}{99}
\bibitem{bs} 
M. Brodmann and R. Y. Sharp, {\it Local Cohomology: An Algebraic Introduction with Geometric Applications}, Cambridge University Press 1998.
\bibitem{cdlr}
M. D. Choi, Z. D. Dai, T. Y. Lam, B. Reznick. The Pythagoras number of some affine
algebras and local domains, {\it J. reine angew. Math. (Crelle)} {\bf 336} (1982), 45-82.
\bibitem{cop}
Jean-Louis Colliot-Th\'el\`ene, Manuel Ojanguren and Raman Parimala. Quadratic forms over fraction fields of two-dimensional henselian rings and Brauer groups of related schemes. In: {\it Proceedings of the International Colloquium on Algebra, Arithmetic and Geometry}. Tata Inst. Fund. Res. Stud. Math. {\bf 16}, pp. 185-217. Narosa, New Delhi (2002).



\bibitem{gh}
Phillip Griffiths and Joseph Harris. {\it Principles of algebraic geometry.} 2nd edition. Wiley Classics Library. New York, NY: John Wiley \& Sons, 1994.

\bibitem{ega1965} 
Alexander Grothendieck. \'El\'ements de g\'eom\'etrie alg\'ebrique. IV. \'Etude locale des sch\'emas et des morphismes de sch\'emas. Seconde partie. {\it Inst. Hautes \'Etudes Sci. Publ. Math.} {\bf 24} (1965).

\bibitem{ega1967} 
Alexander Grothendieck. \'El\'ements de g\'eom\'etrie alg\'ebrique. IV. \'Etude locale des sch\'emas et des morphismes de sch\'emas. Quatri\`eme partie. {\it Inst. Hautes \'Etudes Sci. Publ. Math.} {\bf 32} (1967).



\bibitem{hh}
David Harbater and Julia Hartmann. Patching over fields. {\it Israel J. Math.} {\bf 176} (2010), 61-107.

\bibitem{hhk1}
David Harbater, Julia Hartmann and Daniel Krashen. Applications of patching to
quadratic forms and central simple algebras. {\it Invent. Math.} {\bf 178} (2009), 231-263.

\bibitem{hhk2}
David Harbater, Julia Hartmann and Daniel Krashen. Weierstrass preparation and algebraic invariants. {\it Math. Ann.} {\bf 356}(4) (2013), 1405-1424.

\bibitem{lty}
Tsit Yuen Lam. {\it Introduction to Quadratic Forms over Fields}. Graduate Studies in Mathematics {\bf 67}. American Mathematical Society, Providence 2005.

\bibitem{dl}
David B. Leep. The u-invariant of p-adic function fields. {\it J. Reine Angew. Math.} {\bf 679} (2013), 65-73.

\bibitem{mat1} 
Hideyuki Matsumura. {\it Commutative ring theory}. Cambridge University Press, 1986.  

\bibitem{mat2} 
Hideyuki Matsumura. On the dimension of formal fibres of a local ring. {\it Algebraic Geometry and Commutative Algebra in Honor of Masayoshi Nagata} Vol. I (1987) 261-266.  

\bibitem{Mum}
David Mumford. {\it The red book of varieties and schemes}. Second, expanded edition. Lecture notes in Mathematics {\bf 1358}. Springer-Verlag Berlin Heidelberg 1999.

\bibitem{nag} 
Masayoshi Nagata. {\it Local rings}. Interscience, 1962.

\bibitem{ps}
Raman Parimala,  Venapally Suresh. The u-invariant of the function fields of p-adic curves. {\it Ann. of Math. (2)} {\bf 172}(2) (2010), 1391-1405.

\end{thebibliography}
\end{document}